\newtheorem{theorem}{Theorem}[section]
\newtheorem{lemma}[theorem]{Lemma}
\newtheorem{prop}{Proposition}[section]
\newtheorem{remark}{Remark}
\newtheorem*{remark*}{Remark}
\newtheorem*{theorem*}{Theorem}
\newtheorem{definition}{Definition}[section]
\begin{document}

\title{Farthest Point Map on a Centrally Symmetric Convex Polyhedron}
%

%
\author{Zili Wang}
%

%
%

\maketitle              

\begin{abstract}
The farthest point map sends a point in a compact metric space to the set of points farthest from it. We focus on the case when this metric space is a convex centrally symmetric polyhedron, so that we can compose the farthest point map with the antipodal map. The purpose of this work is to study the properties of their composition. We show that: 1. the map has no generalized periodic points; 2. its limit set coincides with its generalized fixed point set; 3. each of its orbit converges; 4. its limit point set is contained in a finite union of hyperbolas. We will define some of these terminologies later.

\end{abstract}

\section{Introduction}

On a compact metric space $\Sigma$, one can define the farthest point map $F$ as follows: for any $p\in \Sigma$, $F(p)$ is the set of all points $q$ such that the distance from $p$ is maximized at $q$. 

As an example, if $\Sigma$ is a sphere, then for any $p\in \Sigma$, $F(p)=\{\phi(p)\}$, where $\phi(p)$ is the antipodal point of $p$ on $\Sigma$. Then we say $F$ is single-valued ($F(p)$ has one element for any $p$), and also an involution, since $F(\phi(p))=\{p\}$. The ``converse'' to this statement is a conjecture by Steinhaus: if $\Sigma$ is convex, and $F$ is single-valued and involutive, then $\Sigma$ is a sphere. 

The conjecture was disproved by C. Vilcu in 2000  through the construction of a family of counter-examples (see \cite{V1}). But it led to a series of research work on the properties of the farthest point map $F$, especially in the context when $\Sigma$ is a convex surface. For instance, in \cite{Z}, T. Zamfiresu proved that $F$ is single-valued for all $p\in\Sigma$ except for a $\sigma$-porous set. With the additional assumption that $\Sigma$ is a polyhedral surface, J. Rouyer showed in \cite{R1} that $F$ is piecewise single-valued, and the multi-valued set is contained in a finite union of algebraic curves of degree at most 10. The interested reader may also refer to \cite{V2} for a good survey on this topic.

In this work, we are interested in the case $\Sigma$ is the surface of a centrally symmetric convex polyhedron equipped with the intrinsic path metric, where we observed some good properties in the dynamics of the farthest point map. 

We first introduce some notations and definitions before stating the main results. Let $\phi$ be the antipodal map on $\Sigma$. Define $f=F \circ \phi$.

\begin{definition}[Generalized Periodic Point and Fixed Point] Suppose there is a positive integer $n$ such that $p\in f^n(p)$ and $p\notin f^m(p)$ if $m<n$. We say p is a generalized periodic point of $f$ with order $n$ if $n>1$, and a generalized fixed point of $f$ if $n=1$. The latter case happens if and only if $\phi(p)\in F(p)$ (that is, $\phi(p)$ is a farthest point from $p$).

In the same way, we can define the generalized periodic point of $F$ with order $n$ ($F$ has no generalized fixed point). 

\end{definition}
This definition coincides with the usual definition of periodic and fixed points wherever $F$ and $f$ are single-valued.

\begin{definition} [Orbit]
A sequence $p_0, p_1, p_2, p_3 \dots$ is an orbit of $F$ if $p_n\in F(p_{n-1})$ for all $n\geq 1$. Similarly, we can define an orbit of $f$.

\end{definition}

If $p$ is a generalized periodic point of $F$ (or $f$), then one can find an orbit of $F$ (or $f$) such that $p_0=p_n=p$.

\begin{definition} [Limit Point and Limit Set] Let $\{p_n\}_{n=0}^{\infty}$ be an orbit of $f$. If there is a subsequence of $\{p_n\}$ converging to $\overline{p}\in \Sigma$, we say $\overline{p}$ is a limit point of this orbit.

The collection of all limit points of all orbits of $f$ is the limit set of f.
\end{definition}

 We will establish the following results:

\begin{theorem}
f has no generalized periodic points.
\end{theorem}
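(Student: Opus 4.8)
The plan is to introduce the \emph{farthest-distance function} $g\colon\Sigma\to\mathbb R$, $g(p)=\max_{q\in\Sigma}d(p,q)=d(p,F(p))$ (all points of $F(p)$ are equidistant from $p$, so $g$ is well defined), and to use it as a Lyapunov function for $f$. Since $\phi$ is an isometry, $g\circ\phi=g$. The key monotonicity statement is: if $p'\in f(p)$, then $g(p')\ge g(p)$, with equality if and only if $p\in F(\phi(p'))$ (we always have $\phi(p')\in F(p)$ when $p'\in f(p)$, so the equality case says precisely that $p$ and $\phi(p')$ are mutually farthest). To prove it, write $p'=\phi(r)$ with $r\in F(p)$, so $d(p,r)=g(p)$; then $g(p')=g(\phi(r))=g(r)=\max_q d(r,q)\ge d(r,p)=g(p)$, with equality exactly when $p$ maximizes $d(r,\cdot)$.

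Next I would extract the rigidity a periodic orbit would have to satisfy. Suppose $p$ is a generalized periodic point of order $n>1$ and fix an orbit $p_0=p,p_1,\dots,p_n=p$ with $p_{k+1}\in f(p_k)$; a short argument from the minimality of $n$ lets us take $p_0,\dots,p_{n-1}$ distinct. By the monotonicity, $g(p_0)\le g(p_1)\le\cdots\le g(p_n)=g(p_0)$, so $g$ is constant along the orbit, say equal to $a$; put $q_k:=\phi(p_k)$. The orbit relation gives $q_{k+1}\in F(p_k)$, and the equality case applied to the step $p_{k-1}\to p_k$ gives $p_{k-1}\in F(q_k)$, hence $q_{k-1}\in F(p_k)$. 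So for every $k$, both $q_{k-1}$ and $q_{k+1}$ are farthest points of $p_k$, at distance $a$ and mutually farthest with $p_k$; equivalently $d(p_i,q_j)=a$ whenever $i\equiv j\pm1\pmod n$, while all pairwise distances among $p_0,\dots,p_{n-1},q_0,\dots,q_{n-1}$ are at most $a$. Finally, $p_0\notin f(p_0)$ forces $\phi(p_0)\notin F(p_0)$, hence $d(p_0,\phi(p_0))<a$; and when $n\ge3$ the points $q_{k-1}$ and $q_{k+1}$ are distinct, so $F$ is multivalued at every orbit point.

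It remains to show that no such configuration exists on a centrally symmetric convex polyhedron, and this is the main obstacle. Here I would lean on the geometry of farthest points on a convex surface: $q\in F(p)$ implies $q$ lies on the cut locus $C(p)$, so there are at least two shortest paths from $p$ to $q$ unless $q$ is a vertex of $\Sigma$; and since $q$ is a local maximum of $d(p,\cdot)$, the directions in which these shortest paths enter $q$ must ``surround'' $q$, unless $q$ is a vertex of small enough cone angle --- of which Gauss--Bonnet allows only finitely many. Central symmetry should be essential: shortest paths, cut loci and farthest-point sets are all $\phi$-equivariant, $\phi$ is fixed-point-free on $\Sigma$, and the \emph{strict} inequality $d(p_0,\phi(p_0))<a$ means the shortest paths from $p_0$ to its antipode are strictly shorter, hence distinct from those realizing the distance $a$ from $p_0$ to $q_{k\pm1}$. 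I would try to use these facts, together with the constraints $d(p_i,q_j)=a$ for $i\equiv j\pm1$, to pin the positions of the $p_k$ down enough that $\phi(p_0)$ is forced into $F(p_0)$, contradicting $d(p_0,\phi(p_0))<a$; when $n\ge3$ the extra input that $F$ is multivalued at every $p_k$ (so, by Rouyer's theorem mentioned in the introduction, every $p_k$ lies on one of finitely many fixed algebraic curves) should help. I expect the delicate part to be managing the several cut loci $C(p_k)$, the symmetry $\phi$, and these metric constraints simultaneously, and the cases $n=2$ and $n\ge3$ may need to be handled separately.
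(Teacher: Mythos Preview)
Your reduction is correct and essentially matches the paper's: the function $g$ (the paper calls it the radius $d$) is nondecreasing along orbits of $F$, hence constant along a periodic orbit, and the equality case forces consecutive points to be \emph{mutually} farthest. The paper packages this as Lemma~3.1: if $p\in F^k(p)$ then already $p\in F^2(p)$, i.e., there exist $p,q$ with $q\in F(p)$ and $p\in F(q)$. Your bookkeeping with the whole orbit $p_0,\dots,p_{n-1}$ and the array of distances $d(p_i,q_j)$ is unnecessary; a single mutually-farthest pair is all that is used.

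The genuine gap is precisely where you flag ``the main obstacle'': you have reduced to showing that mutually farthest points on a centrally symmetric convex polyhedron must be antipodal, but you do not prove it, and the directions you sketch --- cut-locus combinatorics, Rouyer's theorem on the multivalued locus, separating $n=2$ from $n\ge3$ --- are off target. The paper's argument (Lemma~3.2) is a short Gauss--Bonnet count. Assume $q\in F(p)$, $p\in F(q)$, $q\neq\phi(p)$, and let $\mathscr L_{pq}$ be the lune bounded by two shortest $pq$-paths that contains $\phi(p)$ in its interior. Central symmetry is used as follows: a shortest path $g$ from $p$ to $\phi(p)$ together with its image $\phi(g)$ forms a loop through $p$ and $\phi(p)$ lying entirely inside $\mathscr L_{pq}$ and cutting $\Sigma$ into two pieces exchanged by $\phi$; each piece therefore carries total angular deficit $2\pi-\delta_p$, and adding back the deficit of $\phi(p)$ shows the interior of $\mathscr L_{pq}$ carries deficit at least $2\pi$. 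The Gauss--Bonnet identity for a lune (Lemma~2.6) then gives $\alpha_p+\alpha_q\ge 2\pi$. On the other hand, since $q\in F(p)$ and $p\in F(q)$, both dihedral angles are strictly less than $\pi$ (Lemma~2.7, the standard fact that shortest paths arriving at a farthest point make no angle $\ge\pi$), a contradiction. With this lemma in hand your argument finishes in one line: $p_k$ and $\phi(p_{k+1})$ are mutually farthest, hence $p_{k+1}=p_k$, so $p$ is a generalized fixed point, not periodic of order $>1$.
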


\begin{theorem}
The limit set of f agrees with the generalized fixed point set of f.
\end{theorem}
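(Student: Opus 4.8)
The plan is to prove the two inclusions separately. The inclusion "generalized fixed point set $\subseteq$ limit set" is immediate: if $p$ is a generalized fixed point then $\phi(p)\in F(p)$, i.e. $p\in f(p)$, so the constant sequence $p,p,p,\dots$ is an orbit of $f$ and $p$ is a limit point of it. The substance of the theorem is the reverse inclusion, that every limit point of every orbit of $f$ is a generalized fixed point.

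For this I would introduce the eccentricity (farthest-distance) function $\rho:\Sigma\to\mathbb{R}$, $\rho(x)=\max_{y\in\Sigma}d(x,y)$, where $d$ is the intrinsic metric; since $\Sigma$ is compact this maximum is attained and $\rho$ is continuous (indeed $1$-Lipschitz). The crucial observation — and the only place central symmetry is used — is that $\phi$ is an isometric involution of $(\Sigma,d)$, being the restriction of $x\mapsto -x$, so $\rho\circ\phi=\rho$. Combining this with the definition $F(x)=\{q:d(x,q)=\rho(x)\}$, I claim $\rho$ is non-decreasing along every orbit of $f$: if $p_{n+1}\in f(p_n)=F(\phi(p_n))$ then $d(\phi(p_n),p_{n+1})=\rho(\phi(p_n))=\rho(p_n)$, hence $\rho(p_{n+1})\ge d(p_{n+1},\phi(p_n))=\rho(p_n)$. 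Since $\rho\le\operatorname{diam}(\Sigma)<\infty$, the sequence $\rho(p_n)$ converges to some $L$, and by continuity of $\rho$ every limit point of the orbit has $\rho$-value $L$.

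Now let $\overline{p}$ be a limit point of an orbit $\{p_n\}$, say $p_{n_k}\to\overline{p}$. By compactness of $\Sigma$, pass to a further subsequence so that $p_{n_k+1}\to\overline{q}$. Passing to the limit in $d(\phi(p_{n_k}),p_{n_k+1})=\rho(\phi(p_{n_k}))$, using continuity of $\phi$, $d$ and $\rho$, gives $d(\phi(\overline{p}),\overline{q})=\rho(\phi(\overline{p}))$, i.e. $\overline{q}\in f(\overline{p})$; moreover this common value equals $L$. Since $\overline{q}$ is also a limit point of the orbit (along the shifted subsequence), $\rho(\overline{q})=L$. Using that $\phi$ is an isometric involution, $d(\phi(\overline{q}),\overline{p})=d(\overline{q},\phi(\overline{p}))=L=\rho(\overline{q})=\rho(\phi(\overline{q}))$, so $\overline{p}\in F(\phi(\overline{q}))=f(\overline{q})$, and therefore $\overline{p}\in f(\overline{q})\subseteq f^{2}(\overline{p})$. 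Finally I invoke Theorem 1: if $\overline{p}\notin f(\overline{p})$, the least $n$ with $\overline{p}\in f^{n}(\overline{p})$ would be exactly $2$, making $\overline{p}$ a generalized periodic point of order $2$, which is impossible. Hence $\overline{p}\in f(\overline{p})$, i.e. $\overline{p}$ is a generalized fixed point.

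The main obstacle is really just isolating the monotonicity lemma for $\rho$ along orbits and recognizing that it forces convergence of $\rho$-values; everything after that is a routine compactness-and-continuity argument closed off by Theorem 1. The one place to be careful is the set-valued bookkeeping — confirming that $d(\phi(x),q)=\rho(\phi(x))$ is exactly the statement $q\in f(x)$, and that the two memberships $\overline{q}\in f(\overline{p})$ and $\overline{p}\in f(\overline{q})$ legitimately yield $\overline{p}\in f^{2}(\overline{p})$ in the multi-valued sense — but these raise no genuine difficulty.
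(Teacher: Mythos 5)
Your proof is correct and follows essentially the same route as the paper: monotonicity and continuity of the eccentricity function $d=\rho$ along orbits, a compactness argument to extract $\overline{p}$ and $\overline{q}$, passing to the limit to get the mutual memberships, and closing with the no-period-two result. The only cosmetic differences are that you take $\overline{q}$ to be the limit of $p_{n_k+1}$ rather than of $\phi(p_{n_k+1})$, and you invoke Theorem 1.1 at the end where the paper cites Lemma 3.2 directly; both are equivalent since Theorem 1.1 is deduced from Lemmas 3.1 and 3.2.
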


\begin{theorem}
Every orbit of f forms a convergent sequence.
\end{theorem}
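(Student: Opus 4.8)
The plan is to exhibit a bounded, non-decreasing quantity along an arbitrary orbit of $f$, read off its limit, and then use Theorem 2 together with the geometry of $\Sigma$ to rule out oscillation, so that the orbit must converge.

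First I would introduce the circumradius function $r\colon\Sigma\to\mathbb R$, $r(x)=\max_{q\in\Sigma}d(x,q)$; it is continuous because $\Sigma$ is compact, and $r\circ\phi=r$ because $\phi$ is an isometry with $\phi^2=\mathrm{id}$. Fix an orbit $p_0,p_1,p_2,\dots$ of $f$ (that is, an arbitrary choice $p_{n+1}\in f(p_n)=F(\phi(p_n))$) and set $\rho_n:=d(\phi(p_n),p_{n+1})$. Since $p_{n+1}$ is a farthest point from $\phi(p_n)$ we have $\rho_n=r(\phi(p_n))=r(p_n)$, and the monotonicity is
\[
\rho_{n+1}=r(\phi(p_{n+1}))\ \ge\ d(\phi(p_{n+1}),p_n)\ =\ d(\phi(p_n),p_{n+1})\ =\ \rho_n,
\]
where the middle equality is central symmetry ($\phi$ an isometry, $\phi^2=\mathrm{id}$) and the inequality uses that $p_n$ is merely a point whereas $p_{n+2}$ realizes $r(\phi(p_{n+1}))$. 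As $\rho_n\le\operatorname{diam}\Sigma<\infty$, the sequence $\rho_n$ increases to a limit $R$, so $\rho_{n+1}-\rho_n\to 0$; this last fact says $p_n$ is \emph{asymptotically} a farthest point from $\phi(p_{n+1})$, i.e.\ $r(\phi(p_{n+1}))-d(\phi(p_{n+1}),p_n)\to 0$, which I will call asymptotic reversibility of the orbit.

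Next I would pass to a convergent subsequence $p_{n_k}\to\bar p$ (compactness of $\Sigma$). Continuity gives $r(\bar p)=\lim\rho_{n_k}=R$, and Theorem 2 identifies $\bar p$ as a generalized fixed point, so $d(\bar p,\phi(\bar p))=r(\phi(\bar p))=R$; in particular $\phi(\bar p)\in F(\bar p)$, so $\bar p$ and $\phi(\bar p)$ form a mutual farthest pair at distance $R$. Now refine so that $p_{n_k+1}\to\bar q$ as well. The farthest point map is upper semicontinuous with closed graph — if $x_k\to x$, $y_k\in F(x_k)$, $y_k\to y$, then $d(x,y)=\lim d(x_k,y_k)=\lim r(x_k)=r(x)$, so $y\in F(x)$ — and feeding the orbit relation and the asymptotic reversibility through this yields $\bar q\in f(\bar p)$ and $\bar p\in f(\bar q)$. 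Both $\bar p$ and $\bar q$ are limit points, hence generalized fixed points with $r=R$, and tracking distances via central symmetry one finds $d(\bar p,\phi(\bar p))=d(\bar q,\phi(\bar q))=d(\bar p,\phi(\bar q))=d(\phi(\bar p),\bar q)=R$. What remains is to show $\bar p=\bar q$: if every pair of consecutive limit points coincides, then $d(p_n,p_{n+1})\to 0$, the limit set collapses to a point, and the orbit converges.

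The hard part is precisely this last step, and it cannot be done softly: the triangle inequality is entirely consistent with the quadrilateral $\bar p,\bar q,\phi(\bar p),\phi(\bar q)$ just described, and the $2$-periodic sequence $\bar p,\bar q,\bar p,\bar q,\dots$ is a genuine orbit of $f$ that Theorem 1 does \emph{not} exclude, since each of $\bar p,\bar q$ already lies in its own image under $f$ and therefore has order $1$, not $2$. So one must bring in the intrinsic geometry of the centrally symmetric convex polyhedron. The statement I would aim for is that at the antipode of a generalized fixed point the farthest point map is single-valued, $F(\phi(\bar p))=\{\bar p\}$, which forces $\bar q=\bar p$ immediately; I expect this to follow from the cut-locus description of farthest points on a polyhedron together with the mutual-farthest-pair property and the central symmetry of the geodesics realizing $R$, and this identification — combined with a stability argument in which the monotone $\rho_n$ confines the tail of the orbit to a small neighborhood of $\bar p$ — is where the real work will lie. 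As a fallback, a quantitative route may be cheaper: since $r\circ\phi$ and $d(\cdot,\phi(\cdot))$ are semialgebraic on $\Sigma$, a {\L}ojasiewicz-type inequality bounding $d(p_n,p_{n+1})$ by a power of $\rho_{n+1}-\rho_n$ would make $\sum_n d(p_n,p_{n+1})$ converge, so the orbit is Cauchy and the oscillation issue never arises.
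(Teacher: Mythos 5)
Your plan has a genuine gap at the very last step. Even granting that any two consecutive limit points coincide (and I indicate below that you already possess the tools for this), you deduce $d(p_n,p_{n+1})\to 0$ and then assert that ``the limit set collapses to a point, and the orbit converges.'' That implication is false on its own: a sequence on a circle with angles $\theta_n=\sum_{k\le n}1/k$ has $d(p_n,p_{n+1})\to 0$ yet its limit set is the whole circle. What is missing is precisely the paper's Lemma 5.2, which shows that the set of limit points of a fixed orbit is \emph{finite}, hence discrete, by exhibiting the limit points as solutions to the line equations $\norm{I_i(x,y)-(x,y)}=\norm{I_j(x,y)-(x,y)}=L$ with $I_i{I_j}^{-1}$ a rotation. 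With discreteness in hand, the paper runs an escape-set argument: fix a limit point $\overline p$ and a radius $\epsilon_0$ so that $\overline p$ is the only limit point in its $\epsilon_0$-neighbourhood; if the orbit left the $\tfrac{\epsilon'}{2}$-ball around $\overline p$ infinitely often, the escape points $p_{n_j}$ would accumulate at some $\overline q\neq\overline p$, while their predecessors $p_{n_j-1}$, trapped in the small ball where $\overline p$ is the only limit point, must converge to $\overline p$; passing to the limit in the orbit relation gives $\phi(\overline q)\in F(\overline p)$ and $\overline p\in F(\phi(\overline q))$, and Lemma 3.2 then forces $\overline q=\overline p$, a contradiction. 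The discreteness from Lemma 5.2 is exactly what your scheme is missing and cannot be obtained ``softly.''

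On the consecutive-limit-point step itself, the auxiliary claim you reach for, $F(\phi(\overline p))=\{\overline p\}$, is neither proved nor needed, and the paper nowhere asserts single-valuedness of $F$ at the antipode of a generalized fixed point. From the relations you already derive, $\overline q\in f(\overline p)$ and $\overline p\in f(\overline q)$, apply $\phi$ to obtain $\phi(\overline q)\in F(\overline p)$ and $\overline p\in F(\phi(\overline q))$; this is exactly the hypothesis of Lemma 3.2 (the lemma underlying Theorem 1.1) applied to the pair $(\overline p,\phi(\overline q))$, yielding $\phi(\overline q)=\phi(\overline p)$ and hence $\overline q=\overline p$. Your observation that Theorem 1.1 ``does not exclude'' the two-cycle $\overline p,\overline q,\overline p,\dots$ conflates the theorem's \emph{statement} (no generalized periodic points of order at least $2$) with the \emph{lemma} used to prove it, which applies here directly and collapses the pair. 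Finally, the \L{}ojasiewicz fallback you float is left entirely conjectural and involves nontrivial exponent bookkeeping; the paper obtains the needed quantitative control from the algebraic description of limit points in Lemma 5.2, not from a gradient inequality.
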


\begin{theorem}
The limit set of f is contained in a finite union of algebraic curves of degree at most 2.
\end{theorem}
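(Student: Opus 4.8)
The plan is to combine Theorem~2 with the piecewise-flat structure of $\Sigma$. By Theorem~2 it suffices to show that the generalized fixed point set $G=\{p\in\Sigma:\phi(p)\in F(p)\}$ is contained in a finite union of algebraic curves of degree at most $2$. The edges of $\Sigma$, viewed inside a face, are line segments (degree-$1$ curves) and the vertices are finitely many points, so it is enough to bound $G\cap\operatorname{int}(A)$ for each face $A$; for such $p$, the antipode $\phi(p)$ lies in the interior of the antipodal face $A'=\phi(A)$.

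The key point is a degree bound for ``equidistance loci.'' Fix a face $A$ and two combinatorial types $\tau_1,\tau_2$ (finite edge sequences) of geodesics running from $A$ to $A'$. Unfolding the chain of faces along $\tau_i$ produces a Euclidean isometry $\psi_i$ carrying $A'$ into the plane of $A$, and whenever the type-$\tau_i$ geodesic from $p$ to $\phi(p)$ is realized, its length equals $|p-\psi_i(\phi(p))|$ in the coordinates of $A$. Since the antipodal faces of a centrally symmetric polytope are parallel and $\phi$ acts on them as $-\mathrm{id}$, the restriction $\phi|_A$ is an affine isometry onto $A'$, so $\Psi_i:=\psi_i\circ\phi|_A$ is an affine isometry, $\Psi_i(p)=O_ip+c_i$ with $O_i$ orthogonal. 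Expanding,
\[
|p-\Psi_1(p)|^2-|p-\Psi_2(p)|^2 \;=\; 2\,p^{\top}(O_2-O_1)\,p \;+\;(\text{affine in }p),
\]
because the $|p|^2$ contributions coming from $|O_ip|^2=|p|^2$ cancel. Hence the set $\{p\in A:|p-\Psi_1(p)|=|p-\Psi_2(p)|\}$ is the zero locus of a polynomial $Q_{A,\tau_1,\tau_2}$ of degree at most $2$ (generically a conic, a line or a point in degenerate cases).

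Now take $p\in G\cap\operatorname{int}(A)$. Then $\phi(p)$ is a farthest point from $p$, hence a local maximum of $d(p,\cdot)$, hence a point of the cut locus of $p$; a local maximum at a smooth point is reached by at least two distinct shortest geodesics (with a single one, continuing it straight increases the distance), while the alternative that $\phi(p)$ is a vertex $v$ of $\Sigma$ forces $p=\phi(v)$ and contributes only finitely many points. Choosing two shortest geodesics from $p$ to $\phi(p)$, their types $\tau_1,\tau_2$ lie in the finite family of combinatorial types of shortest geodesics on a convex polyhedron (e.g.\ as in the structure theory of \cite{R1}), and since the two geodesics tie for length we get $p\in\{Q_{A,\tau_1,\tau_2}=0\}$. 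Letting $(A,\tau_1,\tau_2)$ range over this finite set exhibits $G$, up to finitely many points, inside a finite union of degree-$\le 2$ curves, which together with Theorem~2 proves the statement.

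The main obstacle is the degenerate possibility that $Q_{A,\tau_1,\tau_2}\equiv 0$ on an open region $R$ — i.e.\ that two combinatorially distinct shortest routes from $p$ to $\phi(p)$ tie for an open set of $p$ (for instance a geodesic and its mirror image under a symmetry of $\Sigma$), so that the associated ``curve'' is all of $A$ and carries no information. To handle this I will exploit more than cut-locus membership: on $R$ the point $\phi(p)$ is a local maximum of $d(p,\cdot)$ realized by exactly two shortest geodesics, which forces their arrival directions at $\phi(p)$ to be opposite; in the unfolding this says $\phi(p)$ is the \emph{midpoint} of the straight segment joining the two lifts of $p$, an \emph{affine} relation in $p$. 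Either this relation is nontrivial, putting $p$ on a line, or it holds identically, which would produce an open two-parameter family of geodesic loops of $\Sigma$ with $\phi(p)$ at the midpoint; I expect to exclude this last case by the concentration of curvature at the vertices of $\Sigma$ (equivalently, by showing $G$ has empty interior), possibly using a by-product of the proofs of Theorems~1--3. This degenerate analysis, rather than the degree computation, is where the real work lies.
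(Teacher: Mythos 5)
Your overall plan matches the paper's: by Theorem~1.2 it suffices to control the generalized fixed point set, which you locate, via an unfolding of $\Sigma$, on equidistance loci of the form $\lvert p-\Psi_1(p)\rvert=\lvert p-\Psi_2(p)\rvert$ with $\Psi_i$ Euclidean isometries, and the degree-$\le 2$ computation you give is correct. (The paper decomposes $\Sigma$ along the union of cut loci of the vertices rather than face by face, but this is a technical rather than conceptual difference.) However, you correctly identify, and then leave open, the real gap: nothing so far rules out that the quadratic $Q_{A,\tau_1,\tau_2}$ vanishes identically, in which case the ``curve'' is all of $A$ and the argument collapses.

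That gap is exactly what the paper's Section~5.2 is built to close, and it does so by exploiting the central symmetry in a way your proposal does not. First, Lemma~5.1 shows a non-conical generalized fixed point $\overline p$ is joined to $\phi(\overline p)$ by at least \emph{four} distance minimizers $g_1,g_2,\phi(g_1),\phi(g_2)$; in particular there are two that are not antipodal to each other. Second, for two such minimizers the composition $I_i{I_j}^{-1}$ is a genuine rotation (not a translation): by Lemmas~2.6 and~2.7 the lune between them has dihedral angles strictly between $0$ and $\pi$ at both ends, so the angular deficit it encloses — which by Lemma~4.2 is the rotation angle — lies strictly between $0$ and $2\pi$. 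Third, Lemma~5.3 chooses coordinates adapted to the two reflection axes so that the equation becomes $8\sin\alpha\cdot xy=R_1^2-R_2^2$ with $\sin\alpha\neq 0$; the leading coefficient is visibly nonzero, so the locus is always a rectangular hyperbola (degenerating to the two coordinate axes when $R_1=R_2$) and never the whole plane. In the language of your proposal, $O_1-O_2$ is a nonzero symmetric matrix, so $p^\top(O_1-O_2)p$ is a nontrivial quadratic form; this is the step you were missing.

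One further point: your fallback plan relies on the claim that when $\phi(p)$ is a local maximum reached by ``exactly two shortest geodesics,'' their arrival directions must be opposite, giving an affine relation. This never applies: by Lemma~2.4, a farthest point that is not a conical point is joined to $p$ by at least \emph{three} distance minimizers (two lunes with angles summing to $2\pi$ cannot each have angle $<\pi$ at $q$), and by Lemma~5.1 at least four in the centrally symmetric case. So the ``exactly two'' scenario does not occur, and the additional affine constraint you hoped to extract from it is not available. Replacing this with the rotation-vs-translation dichotomy and the coordinate normalization of Lemma~5.3 is what makes the argument go through.
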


The outline of this work is as follows:\\

In Section 2 we give the notations, terminologies and some elementary but frequently-used lemmas.

In Section 3 we prove Theorem 1.1 and 1.2.

In Section 4 we incorporate the idea of the star-unfolding into a Java program to compute the farthest point set and plot the set where $f$ is not a rational function on a family of centrally symmetric convex octahedra. This construction  works for arbitrary centrally symmetric convex polyhedron, and is necessary to prove Theorem 1.3 and Theorem 1.4 as well.

Finally, in Section 5 we prove both Theorem 1.3 and Theorem 1.4. 

\subsection*{Previous results}
After we finish the paper, we learnt that the proof of many results are already known. Some results in Section 2 are included in \cite{A}: Lemma 2.3 is Theorem (A) on page 72; Case 2 of Lemma 2.5 is Theorem (D) on page 75; Lemma 2.6 is Theorem 2 on page 77. For the reference of any other previously known result, please see the remark after the result. We still keep the proofs that are short and elementary so that the reader may use them to get familiar with the subject.

\subsection*{Acknowledgements}

I would like to thank my adviser Richard Schwartz for many helpful discussions on the problem and feedbacks on the key ideas of proof. The Java program used in this paper is modified from his original program to compute the farthest point map on a regular octahedron.

I am especially grateful to Jo\"el Rouyer, who gives extensive feedbacks on this article, including (and not limited to) comments on the overall structure of this article and suggestions of some better proofs. I would also like to thank him and Costin V\^\i lcu for providing many of the references.

\section{Preliminaries}

On a compact metric space $\Sigma$, denote by $\operatorname{dist}(p,q)$ the distance between two points $p,q\in\Sigma$. 

\subsection{Radius}

\begin{definition}[Radius]

Let $d:\Sigma \to \mathbb{R}$ be a function such that $d(p)=\operatorname{dist}(p,q)$, where $q\in F(p)$. We call $d(p)$ the \textit{radius} at $p$. Notice that $d(p)$ is independent of which $q$ we choose.
\end{definition}

\begin{lemma}
d is nondecreasing on any orbit $\{p_n\}_{n=0}^\infty$ of $F$ through $p$. That is,
$$d(p_0) \leq d(p_1) \leq d(p_2) \leq \dots $$
where $p_n\in F(p_{n-1})$.
\end{lemma}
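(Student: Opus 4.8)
The plan is to show the single inequality $d(p_0) \le d(p_1)$ whenever $p_1 \in F(p_0)$; the full chain then follows by applying this to each consecutive pair in the orbit. So fix $p_0 \in \Sigma$ and let $p_1 \in F(p_0)$, meaning $p_1$ is a point of $\Sigma$ at maximal distance from $p_0$, so $\operatorname{dist}(p_0,p_1) = d(p_0)$. Now let $p_2 \in F(p_1)$, so that $\operatorname{dist}(p_1,p_2) = d(p_1)$. The key observation is simply that $p_0$ is \emph{some} point of $\Sigma$, hence cannot be farther from $p_1$ than the farthest point $p_2$ is: that is, $\operatorname{dist}(p_1,p_0) \le \operatorname{dist}(p_1,p_2)$. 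Combining, $d(p_0) = \operatorname{dist}(p_0,p_1) = \operatorname{dist}(p_1,p_0) \le \operatorname{dist}(p_1,p_2) = d(p_1)$, using symmetry of the metric. This is the whole argument for one step.

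To get the stated chain, I would phrase it as an induction: assuming $d(p_{n-1}) \le d(p_n)$ has been established for all earlier indices, apply the one-step inequality above with $p_0, p_1$ replaced by $p_{n-1}, p_n$ (both legitimate since $p_n \in F(p_{n-1})$ and $p_{n+1} \in F(p_n)$) to conclude $d(p_{n-1}) \le d(p_n)$. Since the one-step inequality holds for every consecutive pair, we obtain $d(p_0) \le d(p_1) \le d(p_2) \le \cdots$ as claimed.

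I do not anticipate any real obstacle here: the statement is essentially the observation that the eccentricity function (distance to the farthest point) cannot decrease when we jump to a farthest point, because the previous point remains available as a competitor. The only things being used are the symmetry of $\operatorname{dist}$ and the definition of $F(p)$ as the argmax of $\operatorname{dist}(p,\cdot)$; in particular no convexity, central symmetry, or polyhedral structure of $\Sigma$ is needed, and the same proof works verbatim for $f = F\circ\phi$ after noting $\operatorname{dist}(p, f(p)\text{-point}) = \operatorname{dist}(\phi(p), F(\phi(p))\text{-point}) = d(\phi(p))$ and that $d$ is $\phi$-invariant if one wants the analogous monotonicity statement for $f$. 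If anything needs care it is only bookkeeping of which point plays the role of the "competitor" at each step, which the induction framing handles cleanly.
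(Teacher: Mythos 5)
Your argument is the same one the paper gives, just written out in more detail: since $p_{n-1}$ is an available competitor for the farthest point from $p_n$, we get $d(p_{n-1}) = \operatorname{dist}(p_{n-1},p_n) \le \operatorname{dist}(p_n,p_{n+1}) = d(p_n)$. Correct, and essentially identical to the paper's one-line proof.
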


\begin{proof}
Since $p_{n+1}\in F(p_n)$, $\operatorname{dist}(p_{n-1},p_n)\leq \operatorname{dist}(p_n,p_{n+1})$.
\end{proof}

\begin{lemma}d is continuous on $\Sigma$.
\end{lemma}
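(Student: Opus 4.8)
The plan is to show that $d$ is Lipschitz with constant $1$, i.e.\ $|d(p)-d(q)|\le \operatorname{dist}(p,q)$ for all $p,q\in\Sigma$; this immediately gives continuity. First I would fix $p,q\in\Sigma$ and pick a farthest point $r\in F(p)$, so that $d(p)=\operatorname{dist}(p,r)$. By the triangle inequality on the compact metric space $\Sigma$,
\[
\operatorname{dist}(q,r)\ge \operatorname{dist}(p,r)-\operatorname{dist}(p,q)=d(p)-\operatorname{dist}(p,q).
\]
Since $d(q)$ is by definition the distance from $q$ to \emph{its} farthest point, and $r$ is one particular point of $\Sigma$, we have $d(q)=\max_{s\in\Sigma}\operatorname{dist}(q,s)\ge \operatorname{dist}(q,r)$, hence $d(q)\ge d(p)-\operatorname{dist}(p,q)$, i.e.\ $d(p)-d(q)\le \operatorname{dist}(p,q)$.

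Next I would run the symmetric argument: pick $r'\in F(q)$ so that $d(q)=\operatorname{dist}(q,r')$, and the same triangle-inequality step with the roles of $p$ and $q$ swapped yields $d(q)-d(p)\le \operatorname{dist}(p,q)$. Combining the two inequalities gives $|d(p)-d(q)|\le \operatorname{dist}(p,q)$, so $d$ is $1$-Lipschitz and therefore continuous.

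There is essentially no obstacle here: the only things being used are that $\Sigma$ is compact (so that the maximum defining $F(p)$ is attained and $d$ is well-defined and finite, as already noted after Definition~2.1) and the triangle inequality, which holds in any metric space. The one point worth a sentence of care is that $d(p)$ does not depend on the choice of $r\in F(p)$ — but this is exactly the remark in Definition~2.1, so it can simply be cited. If one prefers not to invoke Lipschitz continuity explicitly, the same two inequalities give continuity directly: for any $\varepsilon>0$, taking $\delta=\varepsilon$ forces $|d(p)-d(q)|<\varepsilon$ whenever $\operatorname{dist}(p,q)<\delta$.
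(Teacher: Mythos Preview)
Your proof is correct and follows essentially the same approach as the paper's: both establish the $1$-Lipschitz inequality $|d(p)-d(q)|\le\operatorname{dist}(p,q)$ by picking a farthest point from one of the two points, applying the triangle inequality, and then invoking symmetry. The paper's version is just more terse.
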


\begin{proof} Let $p_1,p_2\in\Sigma$ and $q_1\in F(p_1)$. By definition of $d$ and triangle inequality, $d(p_2)\geq \operatorname{dist}(p_2,q_1)\geq d(p_1)-\operatorname{dist}(p_1,p_2)$. By symmetry, $d(p_1)\geq d(p_2)-\operatorname{dist}(p_1,p_2)$. 

Therefore, $\abs{d(p_1)-d(p_2)}\leq \operatorname{dist}(p_1,p_2)$, so $d$ is continuous.
\end{proof}

\begin{remark}This is a known result in \cite{R1} (see Lemma 1).
\end{remark}

Now suppose $\Sigma$ is a convex polyhedral surface, endowed with the intrinsic path metric. This means $\Sigma$ has a flat metric outside a finite set of \textit{conical points} (better known as vertices), denoted by $\mathscr{C}=\{C_1, C_2, \dots, C_{M}\}$. For each $n\in\{1,2,\dots,M\}$, $C_n$ has a neighborhood isometric to a Euclidean cone of angle $2\pi-\delta_n$, where $\delta_n$ is called the \textit{angular deficit} at $C_n$. Since we assume $\Sigma$ is convex, $0<\delta_n<2\pi$ for all $n$. A consequence of the \textit{Gauss-Bonnet Theorem} is that $\sum_{n=1}^{M} \delta_n= 4\pi$.\\ 

\subsection{Distance Minimizer}

\begin{definition}
Let $p,q\in\Sigma$. A distance minimizer from $p$ to $q$ is a shortest path (i.e. a path with length $\operatorname{dist}(p,q)$) connecting $p$ to $q$. Sometimes we denote such a path by $[p, q]$.
\end{definition} 

\begin{lemma}
Let $[p,q]$ be a distance minimizer. Then $[p,q]$ does not pass through any conical point.
\end{lemma}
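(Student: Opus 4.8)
The plan is to argue by contradiction: suppose a distance minimizer $[p,q]$ passes through a conical point $C$ with angular deficit $\delta > 0$, and show we can produce a strictly shorter path, contradicting minimality. First I would note that since $[p,q]$ is a shortest path, it is in particular locally geodesic away from any conical point it meets; so the only way it can fail to be locally length-minimizing is precisely at such a point $C$. Let $x$ be a point on $[p,q]$ slightly before $C$ (along the path) and $y$ a point slightly after, chosen close enough to $C$ that the subpath of $[p,q]$ from $x$ to $y$ through $C$ lies inside the neighborhood of $C$ isometric to a Euclidean cone of total angle $2\pi - \delta$.

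The key geometric step is the local picture near $C$. In the cone neighborhood, the incoming and outgoing segments of $[p,q]$ meet at the apex $C$, dividing the total angle $2\pi - \delta$ into two angles $\alpha$ and $\beta$ with $\alpha + \beta = 2\pi - \delta < 2\pi$. Hence at least one of $\alpha,\beta$ is strictly less than $\pi$; say $\alpha < \pi$. Now I would develop (unroll) the sector of angle $\alpha$ into the Euclidean plane: the two segments $[x,C]$ and $[C,y]$ map to two line segments from the image of $C$ making an angle $\alpha < \pi$, so $x, C, y$ are not collinear in the developed picture, and the straight segment from (the image of) $x$ to (the image of) $y$ is strictly shorter than $|xC| + |Cy|$. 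Because this straight segment subtends an angle $\alpha < \pi$ at the apex, it stays within the developed sector, hence corresponds to an honest path on $\Sigma$ of the same length that avoids $C$. Splicing this in place of the original subpath yields a path from $p$ to $q$ strictly shorter than $\operatorname{dist}(p,q)$, a contradiction.

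The main obstacle to watch is the case analysis at $C$: one must rule out the possibility that the path "turns around" at $C$ (i.e. comes in and goes back out along overlapping directions) or that $C$ is an endpoint rather than an interior point of the path. If $C = p$ or $C = q$ there is nothing to shorten in this way, but the lemma is stated for the whole minimizer $[p,q]$, so I would clarify that "pass through" means as an interior point; the degenerate endpoint case is either excluded by convention or handled separately by noting a minimizer can always be taken to leave a conical endpoint along a well-defined direction. A secondary point is to make sure the chosen shortcut actually lies in the flat cone neighborhood and does not wrap past the apex; this is exactly why we select the sector of angle $\alpha < \pi$, since a chord of an angular sector of opening less than $\pi$ cannot pass through the apex. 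With these points addressed, the shortening argument is elementary.
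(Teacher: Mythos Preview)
Your proof is correct and takes essentially the same approach as the paper: the paper's argument is a one-line appeal to convexity and the triangle inequality (with a picture) to construct a shorter path around the conical point, and your proposal simply spells out that shortcut construction in detail, including the key observation that one of the two sectors at $C$ has angle less than $\pi$.
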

\begin{proof}
If $[p, q]$ passes through a conical point $C_n$, then by convexity and triangle inequality, we can construct a shorter path from $p$ to $q$ (see Figure 1, left). This contradicts the definition of the distance minimizer.\end{proof}

\begin{figure}[h]
\centering
\includegraphics[width=0.8\textwidth]{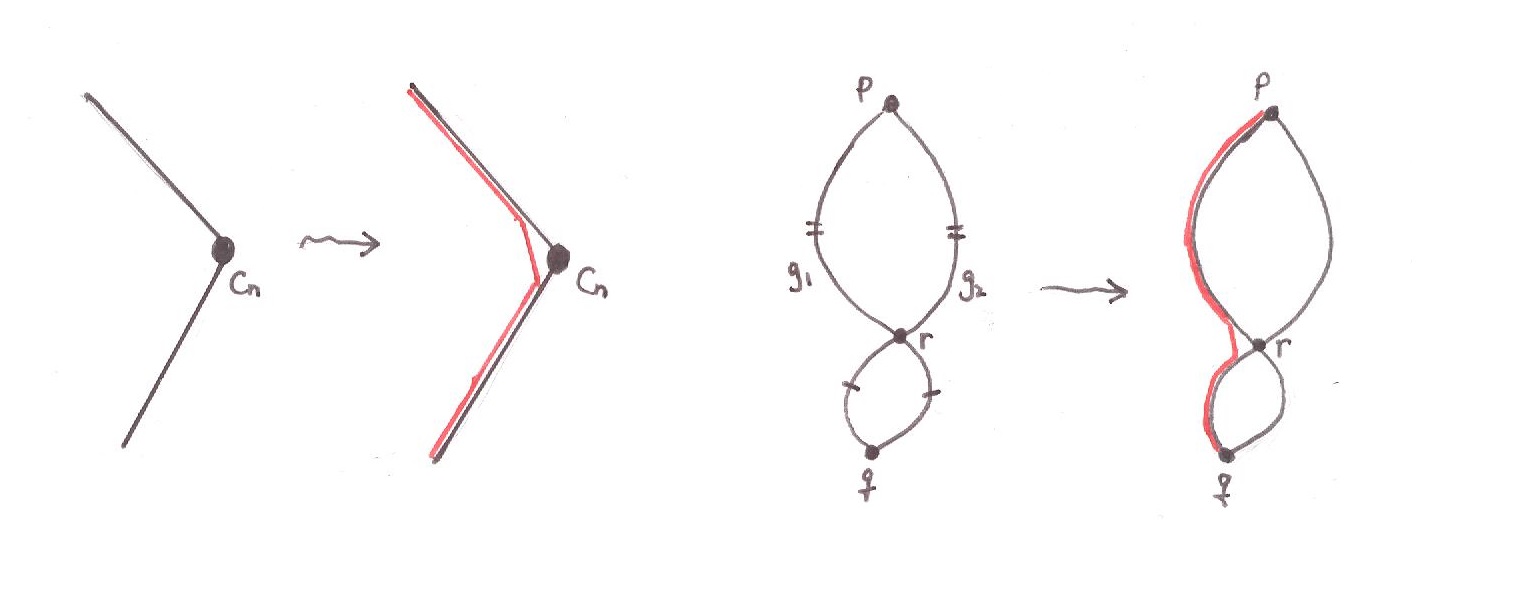}
\caption{The construction of a shorter path than a distance minimizer from $p$ to $q$ in the proof of Lemma 2.3 (left) and Lemma 2.5 (right). }
\end{figure} 

\begin{lemma}
If $q\in F(p)$ and $q\notin \mathscr{C}$, then there are at least three distance minimizers connecting $p$ to $q$.
\end{lemma}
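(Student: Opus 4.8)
The plan is to argue by contradiction: suppose $q \in F(p)$, $q \notin \mathscr{C}$, and there are at most two distance minimizers from $p$ to $q$. I would show that in either case one can move $q$ slightly to a nearby point $q'$ with $\operatorname{dist}(p,q') > \operatorname{dist}(p,q)$, contradicting $q \in F(p)$. Since $q$ is not a conical point, a neighborhood of $q$ is isometric to a flat disk, so ``moving $q$'' makes sense and distances behave as in the Euclidean plane to first order. The key analytic fact is the first variation of arc length: if $\gamma$ is a distance minimizer from $p$ to $q$ arriving at $q$ with unit tangent direction $u$ (pointing from $p$ toward $q$), then for a small displacement of the endpoint in direction $v$, the length changes to first order by $\langle u, v\rangle$. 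Hence the directional derivative of $\operatorname{dist}(p,\cdot)$ at $q$ along $v$ equals $\min_i \langle u_i, v\rangle$, where the $u_i$ are the incoming unit directions of all the distance minimizers (the min because the distance is the minimum over all paths, and only the shortest ones are active — by continuity, paths that were strictly longer stay longer under a small perturbation, while each minimizer contributes its own first-order rate).

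Now I case on the number of minimizers. \textbf{Case 1: exactly one minimizer}, with incoming direction $u_1$. Take $v = u_1$; then the directional derivative is $\langle u_1, u_1 \rangle = 1 > 0$, so $\operatorname{dist}(p,\cdot)$ strictly increases in that direction, contradicting maximality. \textbf{Case 2: exactly two minimizers}, with incoming directions $u_1 \neq u_2$ (they must be distinct, else the two paths would coincide near $q$... more carefully, if $u_1 = u_2$ the two minimizers share a terminal segment and one reduces to the other modulo how one counts them — I should address this, but generically $u_1\neq u_2$). Since $u_1 \neq u_2$ are two unit vectors in the plane, they do not span opposite rays unless $u_2 = -u_1$; in the generic subcase I can pick a unit vector $v$ with $\langle u_1, v\rangle > 0$ and $\langle u_2, v \rangle > 0$ (any $v$ in the open cone ``between'' $u_1$ and $u_2$ on the side where the angle is less than $\pi$ works, e.g.\ $v$ proportional to $u_1+u_2$ when $u_1 \neq -u_2$), giving a positive directional derivative and the same contradiction. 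The one remaining subcase is $u_2 = -u_1$: then the two minimizers arrive at $q$ from exactly opposite directions, so their union is a geodesic loop through $q$; but then I can instead \emph{push $q$ off this line}, taking $v \perp u_1$, which gives first-order derivative $0$ along both, and I must then look at the second-order term — here the picture of Figure 1 (right) is the intended argument: sliding $q$ perpendicularly, each minimizer lengthens by a second-order positive amount (the two paths ``bulge''), so again $\operatorname{dist}(p,\cdot)$ strictly increases. This recovers the classical fact that a farthest point cannot be joined by only two minimizers.

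The main obstacle, and the step needing the most care, is the degenerate configuration where two minimizers arrive antipodally (the $u_2 = -u_1$ subcase), since the first-order argument fails there and one must pass to the second variation; one must check that the second-order change is genuinely positive and not cancelled by curvature, which is fine here because the relevant neighborhood of $q$ is flat. A secondary subtlety is making the ``only the active minimizers matter'' claim rigorous: I should invoke that the length function of a minimizing geodesic varies continuously (indeed smoothly in the flat region) with its endpoint, and that there are only finitely many minimizers (a standard compactness fact on a polyhedral surface, or at least finitely many distinct incoming directions), so that for a sufficiently small perturbation the set of minimizers can only shrink, never introducing a new shorter competitor. I expect the antipodal-minimizers case to be where the referee would want the picture in Figure 1 (right) spelled out, and I would present that computation explicitly while keeping Cases 1 and the generic part of Case 2 brief.
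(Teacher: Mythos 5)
The paper does not supply its own proof of this lemma; immediately after the statement it says ``For a proof, see Lemma 3 of \cite{R1},'' so there is no internal argument to compare against, and your proposal has to stand on its own.

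Your route --- showing that a non-conical point joined to $p$ by at most two minimizers cannot be a local maximum of $\operatorname{dist}(p,\cdot)$ by perturbing the endpoint and using the first (and, in the antipodal subcase, second) variation --- is the standard and correct way to prove this, and is almost certainly also how Rouyer does it. The case split is exhaustive, and the computation in the antipodal subcase (both unfolded distances become $\sqrt{L^2+t^2}>L$ when $q$ is pushed perpendicularly, so their minimum strictly increases) is right. A few points should be tightened before this counts as a complete proof. First, $u_1\neq u_2$ is not merely ``generic'': two distinct minimizers arriving at $q$ from the same direction would coincide on a terminal segment near $q$, which is forbidden by Lemma 2.5; that lemma is proved by an independent triangle-inequality shortcut, so there is no circularity in invoking it despite its later numbering. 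Second, the ``only the active minimizers matter'' step is where the real work lies: you need that for $q'$ near $q$ one has $d_p(q')=\min_i d_i(q')$ with the minimum running only over the unfolded distance functions of the minimizers at $q$, and this rests on there being finitely many geodesics of bounded length from $p$ to $q$ on a convex polyhedral surface (hence a positive gap between $L$ and the next-shortest geodesic length); this should be stated and cited, not left implicit. Finally, Figure 1 (right) in the paper illustrates Lemma 2.5 (two minimizers meeting at an interior point), not the antipodal-endpoint configuration you are handling; the picture you actually want is the unfolded one with $\tilde p_1=q-Lu_1$, $\tilde p_2=q+Lu_1$, and $q'=q+tv$ with $v\perp u_1$.
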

For a proof, see Lemma 3 of \cite{R1}.

\begin{lemma} 
Let $g_1$ and $g_2$ be two distance minimizers emanating from $p$ so that one is not contained in the other. If they have the same endpoint $q$, then $g_1\cap g_2=\{p\}\cup \{q\}$; otherwise, $g_1\cap g_2=\{p\}$.
\end{lemma}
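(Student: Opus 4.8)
The plan is to show, by one unified argument, that every point $r\in(g_1\cap g_2)\setminus\{p\}$ is a common endpoint of $g_1$ and $g_2$ --- i.e.\ $r=q_1=q_2$, where $q_i$ denotes the endpoint of $g_i$ --- because otherwise $g_1\subseteq g_2$ or $g_2\subseteq g_1$, contradicting the hypothesis. Granting this, both clauses follow at once: if $q_1\neq q_2$ there is no such $r$, so $g_1\cap g_2=\{p\}$; if $q_1=q_2=q$, the only possibility besides $p$ is $r=q$, and since $p$ and $q$ plainly lie in $g_1\cap g_2$, this gives $g_1\cap g_2=\{p\}\cup\{q\}$.

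Two standard facts drive the argument. First, every subpath of a distance minimizer is itself a distance minimizer (splice in a shortcut and the whole path gets shorter). Second, by Lemma 2.3 an interior point of a distance minimizer is non-conical, hence has a flat neighborhood in which the minimizer is a straight segment; consequently a shortest path running through such a point is straight there, and two locally shortest paths issuing from one point in one direction coincide until one of them meets $\mathscr{C}$. Now take $r\in(g_1\cap g_2)\setminus\{p\}$ that is not a common endpoint; then at least one of $r\neq q_1$, $r\neq q_2$ holds, say $r\neq q_2$ (otherwise swap the roles of $g_1$ and $g_2$), so $r$ lies in the interior of $g_2$ and in particular $r\notin\mathscr{C}$. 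The key step is a ``swap'': since $r\in g_2$ we have $\operatorname{dist}(p,r)+\operatorname{dist}(r,q_2)=\operatorname{dist}(p,q_2)$, so gluing the $p$-to-$r$ subpath of $g_1$ onto the $r$-to-$q_2$ subpath of $g_2$ produces a path of length $\operatorname{dist}(p,q_2)$ --- a shortest path, hence straight at the non-conical point $r$. Matching its incoming and outgoing directions at $r$ shows the $p$-to-$r$ subpath of $g_1$ reaches $r$ tangent to $g_2$, so by the second fact it coincides with the $p$-to-$r$ subpath of $g_2$. If $r=q_1$ this already gives $g_1\subseteq g_2$; if instead $r$ is interior to $g_1$, then $g_1$ too runs straight through $r$ tangent to $g_2$, so $g_1$ and $g_2$ agree from $p$ onward until one of them ends, again giving $g_1\subseteq g_2$ or $g_2\subseteq g_1$ (with $g_1=g_2$ the degenerate sub-case). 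In every case we contradict the hypothesis that neither of $g_1,g_2$ is contained in the other.

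Assembling the cases yields the lemma as stated in the first paragraph. The part that needs the most care is the tangent-matching-and-uniqueness step: one must check that each subarc invoked really lies in the interior of a minimizer, so that Lemma 2.3 applies and no conical point is crossed. This bookkeeping is slightly delicate precisely because $r$ may be an endpoint of one of the $g_i$, which is why the argument first reduces, using the symmetry of the situation, to the case $r\neq q_2$. The clause ``coincide until one of them ends'' also warrants a short connectedness argument along the arc-length parameter, but that is routine.
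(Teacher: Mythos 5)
Your proposal is correct, and it takes a genuinely different route from the paper's. The paper's proof is the classic corner-cutting argument: it concatenates $g_1|_{[p,r]}$ with $g_2|_{[r,q]}$, observes this has length $\operatorname{dist}(p,q)$, and then invokes the (strict) triangle inequality near $r$ to produce a strictly shorter $p$-to-$q$ path, contradicting minimality of $g_1$. Carried out carefully, that argument requires knowing that the two sub-arcs do not meet at $r$ with angle exactly $\pi$ (equivalently, that $g_1$ and $g_2$ are not locally equal at $r$), and the paper leaves that degenerate case implicit. Your proof goes the other way around: instead of manufacturing a strictly shorter path, you note that the spliced path is an \emph{equally} short path, hence a geodesic, hence straight at the non-conical interior point $r$; matching directions there and invoking uniqueness of geodesics with prescribed initial data in the flat part of $\Sigma$ then forces $g_1$ and $g_2$ to coincide as far as they both extend, giving containment directly. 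The upside of your route is that it never needs to know which corner to cut or that the corner is strict, so the ``$g_1$ and $g_2$ tangent at $r$'' case is absorbed into the main argument rather than being a silent exception. The price is that you have to carry a small connectedness argument for the ``coincide until one ends'' step, which you flag and which is indeed routine: the set of parameters where the two arc-length parametrizations agree is nonempty, closed by continuity, and open because every interior coincidence point is non-conical (Lemma 2.3), so geodesic uniqueness applies locally. Both proofs work; yours handles one more edge case explicitly, and also unifies the ``same endpoint'' and ``different endpoint'' clauses, which the paper treats by an appeal to ``the same idea works for the other case.''
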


\begin{proof}
We prove in the case $g_1$ and $g_2$ are both from $p$ to $q$, and the same idea works for the other case.
suppose $g_1$ and $g_2$ meet at $r$, where $r\neq p,q$. Then since $g_1$ and $g_2$ are distance minimizers, the two segments from $p$ to $r$, one contained in $g_1$ and the other in $g_2$, must have equal length. Similarly, the two segments from $r$ to $q$ also have equal length. Then by triangle inequality, we can construct a path from $p$ to $q$ that is shorter than $g_1$, as Figure 1 (right) shows. This contradicts that $g_1$ is a distance minimizer.
\end{proof}

\subsection{Lunes}
Let $p \in \Sigma$ and $q\in F(p)$. 
Let $\{g_1,g_2,\dots,g_m\}$ be the collection of all distance minimizers from $p$ to $q$. By Lemma 2.5, $g_1,g_2,\dots,g_m$ divide $\Sigma$ into $m$ connected components, called \textit{lunes}. If $m=1$, then $\Sigma\setminus g_1$ is the only component, whose metric completion is a lune with two edges. Otherwise, the edges of each lune are the two distance minimizers bounding it. By Lemma 2.3, a distance minimizer cannot pass through any conical point, so each conical point must be $p$, $q$ or in the interior of a lune. 

Let $\mathscr{L}_{pq}$ be one of these lunes. Let $\alpha_p$ be the \textit{dihedral angle} of $\mathscr{L}_{pq}$ at $p$, which is the internal angle between its edges. Similarly, let $\alpha_{q}$ be the dihedral angle at $q$. 

\begin{lemma}
\begin{align*}
\alpha_p+\alpha_{q}=\sum_{C_n\in{\mathscr{L}_{pq}}^{\mathrm{o}}}\delta_n \tag{\textasteriskcentered}\label{lenseq}
\end{align*}
where the sum is taken over all conical points in ${\mathscr{L}_{pq}}^{\mathrm{o}}$, the interior of $\mathscr{L}_{pq}$.
\end{lemma}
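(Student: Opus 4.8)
The plan is to obtain $(\ref{lenseq})$ as an instance of the Gauss--Bonnet theorem for polyhedral surfaces, applied to the lune $\mathscr{L}_{pq}$ viewed as a topological disk whose boundary is piecewise geodesic with precisely two corners, located at $p$ and at $q$.

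First I would pin down the combinatorics of a lune. When $m\ge 2$, the distance minimizers $g_1,\dots,g_m$ form a graph embedded in $\Sigma$ with vertex set $\{p,q\}$ and $m$ edges, any two of which meet only in $\{p\}\cup\{q\}$ by Lemma 2.5; since $\Sigma$ is homeomorphic to a sphere, Euler's formula yields exactly $m$ faces, and, reading off the cyclic order of the $g_i$ around $p$, each face is bounded by two cyclically consecutive minimizers. Hence each lune is a disk bounded by two geodesic arcs that meet at $p$ and at $q$, exactly as recorded in the paragraph preceding the lemma. When $m=1$, the metric completion of $\Sigma\setminus g_1$ is likewise a disk, bounded by the two copies of $g_1$ that meet at $p$ and at $q$ --- here the dihedral angle $\alpha_p$ is simply the full cone angle of $\Sigma$ at $p$ read along the cut --- and everything below goes through verbatim. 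In either case Lemma 2.3 guarantees that no $g_i$ passes through a conical point, so every conical point of $\Sigma$ contained in $\mathscr{L}_{pq}$ lies in its interior, with the sole exceptions of $p$ and $q$, which are not summed over on the right-hand side of $(\ref{lenseq})$.

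Next I would invoke Gauss--Bonnet. On a flat polyhedral surface the curvature is concentrated at the conical points, the point $C_n$ carrying total curvature equal to its angular deficit $\delta_n$. For a disk $D$ with piecewise-geodesic boundary whose corners have interior angles $\beta_1,\dots,\beta_k$, the discrete Gauss--Bonnet formula reads
\[
\sum_{C_n\in D^{\mathrm o}}\delta_n\;+\;\sum_{j=1}^{k}(\pi-\beta_j)\;=\;2\pi .
\]
Applying this to $D=\mathscr{L}_{pq}$, whose only corners are $p$ with interior angle $\alpha_p$ and $q$ with interior angle $\alpha_q$, gives
\[
\sum_{C_n\in {\mathscr{L}_{pq}}^{\mathrm o}}\delta_n\;+\;(\pi-\alpha_p)\;+\;(\pi-\alpha_q)\;=\;2\pi ,
\]
which rearranges at once to $(\ref{lenseq})$.

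The only real content is the Gauss--Bonnet step, and the cleanest self-contained way to supply it is to develop the lune into the Euclidean plane: cut $\mathscr{L}_{pq}$ along pairwise disjoint arcs joining $p$ to each interior conical point to obtain a simply connected flat polygon, lay this polygon out isometrically in the plane, and compare its angle sum computed as a planar polygon with the one computed from the cone structure of $\Sigma$. I expect the main technical point to be precisely this development --- checking that the cuts can be taken pairwise disjoint and that the resulting polygon is immersed with a well-defined total turning --- though this is routine for convex polyhedral surfaces. An equally short alternative is simply to quote the Gauss--Bonnet theorem for Alexandrov surfaces.
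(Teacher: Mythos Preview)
Your argument is correct and in substance identical to the paper's: the paper carries out precisely the development you describe as the ``self-contained'' route, cutting $\mathscr{L}_{pq}$ along distance minimizers $[p,C_n]$ (pairwise disjoint by Lemma~2.5) to obtain a planar $(2k+2)$-gon and equating its Euclidean angle sum $2k\pi$ with $\alpha_p+\alpha_q+\sum(2\pi-\delta_n)$. Your Gauss--Bonnet formulation is a clean repackaging of the same computation, not a different proof.
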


\begin{proof}
For each $C_n\in {\mathscr{L}_{pq}}^{\mathrm{o}}$, choose a distance minimizer $[p,C_n]$. By Lemma 2.5, $[p,C_n]$ does not intersect the boundary of $\mathscr{L}_{pq}$, so $[p,C_n]\subset\mathscr{L}_{pq}$. Note that ${\mathscr{L}_{pq}}^{\mathrm{o}} - \bigcup\limits_{C_n\in{\mathscr{L}_{pq}}^{\mathrm{o}}}[p, C_n]$ is isometric to the interior of a (2$k$+2)-gon, where $k$ is the number of conical points in ${\mathscr{L}_{pq}}^{\mathrm{o}}$. Figure 2 shows the case when $k=3$. 
\begin{figure}[h]
\centering
\includegraphics[width=0.65\textwidth]{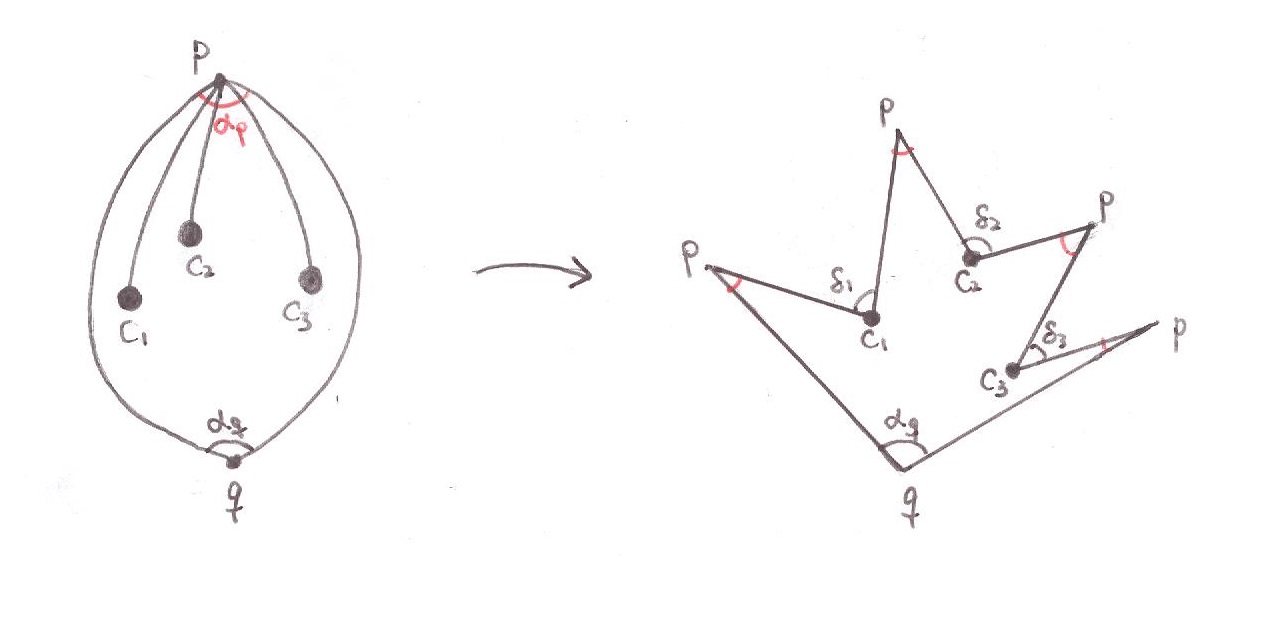}
\caption{${\mathscr{L}_{pq}}^{\mathrm{o}}-\bigcup\limits_{n=1}^3[p, C_n]$ is isometric to the interior of an octagon.}
\end{figure}

Thus, we have
\begin{equation*}
\begin{split}
(2k+2-2)\pi &=\alpha_p+\alpha_{q}+\sum_{C_n\in{\mathscr{L}_{pq}}^{\mathrm{o}}}(2\pi-\delta_n) \\
&=\alpha_p+\alpha_{q}+2k\pi-\sum_{C_n\in{\mathscr{L}_{pq}}^{\mathrm{o}}}\delta_n
\end{split}
\end{equation*}
since both sides are equal to the internal angle sum of a (2$k$+2)-gon. After cancelling $2k\pi$ on both sides, \eqref{lenseq} is established.
\end{proof}

\begin{lemma} Let $\mathscr{L}_{pq}$ be a lune. Then $\alpha_{q}<\pi$.
\end{lemma}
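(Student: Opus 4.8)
The plan is to argue by contradiction: suppose $\alpha_q \geq \pi$ for some lune $\mathscr{L}_{pq}$. The key idea is that if the dihedral angle at $q$ is at least $\pi$, then a neighborhood of $q$ inside $\mathscr{L}_{pq}$ contains a full half-disk (or more) of the cone at $q$, and this allows us to ``round the corner'' at $q$: I would exhibit a point $q'$ in the interior of $\mathscr{L}_{pq}$, near $q$, that is strictly farther from $p$ than $q$ is. This contradicts $q \in F(p)$, since $d(p) = \operatorname{dist}(p,q)$ would be exceeded.

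Here is how I would carry out the construction. Let $g_1, g_2$ be the two edges of $\mathscr{L}_{pq}$ (the two distance minimizers bounding it), meeting at $q$ with internal angle $\alpha_q \geq \pi$. Take the distance minimizer $g_1$ from $p$ to $q$ and consider its unit tangent direction at $q$. Develop (unfold) a neighborhood of $g_1$ into the plane; since $\alpha_q \geq \pi$, there is a direction at $q$, pointing into the interior of $\mathscr{L}_{pq}$, that makes an angle $\geq \pi/2$ with the incoming direction of $g_1$ on the side of the lune interior. Moving from $q$ a small distance $\varepsilon$ along the geodesic in such a direction reaches a point $q' \in \mathscr{L}_{pq}^{\mathrm{o}}$ (staying in the interior for small $\varepsilon$, using Lemma 2.5 to guarantee we don't hit the boundary immediately), and by the planar law of cosines applied in the unfolding, $\operatorname{dist}(p, q') \geq \sqrt{\operatorname{dist}(p,q)^2 + \varepsilon^2} > \operatorname{dist}(p,q)$ — the concatenation of $g_1$ with the short geodesic, suitably developed, is at least this long, and in fact the geodesic distance is bounded below by the straight-line distance in any unfolding along a path. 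Actually, more carefully: the distance $\operatorname{dist}(p,q')$ is at least the length of the shortest path, and by comparing with the developed image one gets $\operatorname{dist}(p,q') \geq \operatorname{dist}(p,q) \cos\theta_{\text{small}} + \ldots$; the clean statement is that when the angle between $g_1$ and $[q,q']$ is $\geq \pi/2$, first-variation-of-arclength gives $\frac{d}{d\varepsilon}\operatorname{dist}(p, q')\big|_{\varepsilon = 0^+} \geq 0$ with a strict second-order gain, hence $\operatorname{dist}(p,q') > \operatorname{dist}(p,q)$.

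The main obstacle I anticipate is handling the degenerate boundary case $\alpha_q = \pi$ cleanly, and more importantly ruling out that the ``farther'' point $q'$ might coincide with $p$ or escape $\mathscr{L}_{pq}$: I need $\operatorname{dist}(p,q)$ to be the true radius, so $q'$ being merely locally farther along one unfolding is not enough — I must ensure $\operatorname{dist}(p,q') > \operatorname{dist}(p,q)$ as intrinsic distances. The cleanest route is probably the first-variation argument: if $\gamma$ is the minimizer $g_1$ arriving at $q$ with direction $v$, and $w$ is a unit vector at $q$ with $\angle(-v, w) \geq \pi/2$, then $\operatorname{dist}(p, \exp_q(\varepsilon w)) \geq \operatorname{dist}(p,q) + \varepsilon\cos(\pi - \angle(-v,w)) + o(\varepsilon)$, wait — the standard first variation formula gives $\frac{d}{d\varepsilon}\big|_{0^+}\operatorname{dist}(p,\exp_q(\varepsilon w)) = -\cos\angle(v, w) = \cos\angle(-v,w)$; since $\alpha_q \geq \pi$ we can choose $w$ into the lune with $\angle(-v,w) < \pi/2$ — hmm, I'd need to re-examine the sign convention — the point is that $\alpha_q \geq \pi$ gives us enough angular room at $q$ to pick an interior direction along which the distance to $p$ strictly increases, contradicting maximality. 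Nailing down this angle bookkeeping, together with the fact (from Lemma 2.5) that such a short geodesic from $q$ stays in $\mathscr{L}_{pq}^{\mathrm{o}}$, is the crux; everything else is routine.
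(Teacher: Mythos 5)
The paper gives no proof of this lemma; it cites Lemma~3 of \cite{R1}, so there is no in-paper argument to compare with, only the question of whether your proposal is complete. The contradiction-by-perturbation strategy is the right idea, but as written it has a genuine gap.

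Near $q$, the function $\operatorname{dist}(p,\cdot)$ is a \emph{minimum} of $m$ pieces, one for each distance minimizer $g_1,\dots,g_m$ from $p$ to $q$. Writing $v_i$ for the unit tangent of $g_i$ at $q$ pointing away from $p$, one has
\[
\operatorname{dist}\bigl(p,\exp_q(\varepsilon w)\bigr)=\operatorname{dist}(p,q)+\varepsilon\,\min_i\cos\angle(v_i,w)+o(\varepsilon),
\]
so to contradict $q\in F(p)$ you must exhibit a $w$ pointing into ${\mathscr{L}_{pq}}^{\mathrm{o}}$ with $\angle(v_i,w)\le\pi/2$ for \emph{every} $i$ (with strictness, or a second-order gain). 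Your argument tracks only $g_1$; that at best shows the single-piece length $d_p^{(1)}$ increases, not the actual distance, since another minimizer could shorten. To close the gap you need the cyclic arrangement of the minimizers around $q$: the return directions $-v_1,\dots,-v_m$ split the circle of directions at $q$ into the lune sectors, so for $i\ne 1,2$ the $-v_i$ all lie in the arc complementary to $\mathscr{L}_{pq}$, whose width is at most $\pi$ once $\alpha_q\ge\pi$; one then checks that the bisector $w$ of the $\mathscr{L}_{pq}$-sector satisfies $\angle(v_i,w)\le\pi/2$ for all $i$. None of this appears in your write-up. Two further problems you flag but do not resolve: the claim that ``the geodesic distance is bounded below by the straight-line distance in any unfolding'' has the inequality backwards --- developing along a chosen path gives an \emph{upper} bound on intrinsic distance, which is exactly why all $m$ minimizers must be controlled simultaneously; and the sign in your first-variation formula is off (the correct derivative is $\min_i\cos\angle(v_i,w)$, so you want $w$ to make \emph{small} angles with the $v_i$, i.e.\ large angles with the return directions $-v_i$). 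The degenerate case $\alpha_q=\pi$ (first variation vanishes against $v_1,v_2$; one must then use that the Hessian of each $d_p^{(i)}$ is strictly positive transverse to $v_i$) and the case $q\in\mathscr{C}$ (where Lemma~2.4 does not guarantee three minimizers and the cone angle is less than $2\pi$) are acknowledged but left open.
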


For a proof of Lemma 2.7, see Lemma 3 of \cite{R1}.

\begin{remark} Lemma 2.7 implies the following statement: If $q\notin\mathscr{C}$, then there is at most one point $p$ such that $q\in F(p)$.

To see this, suppose $p_1\neq p_2$ but $q\in F(p_1)$ and $q\in F(p_2)$. Then $p_2$ must lie in the interior of one of the lunes bounded by distance minimizers from $p_1$ to $q$, denoted by $\mathscr{L}_{p_1 q}$. Similarly, $p_1$ must lie in the interior of one of the lunes bounded by distance minimizers from $p_2$ to $q$, denoted by $\mathscr{L}_{p_2 q}$. Let $\alpha_{p_1 q}$ and $\alpha_{p_2 q}$ be the dihedral angles of $\mathscr{L}_{p_1 q}$ and $\mathscr{L}_{p_2 q}$ at $q$. By Lemma 2.7, $\alpha_{p_1 q}+\alpha_{p_2 q}<2\pi$.

On the other hand, by drawing a picture one sees that $\mathscr{L}_{p_1 q}$ and $\mathscr{L}_{p_2 q}$ have nonempty intersection, which implies $\alpha_{p_1 q}+\alpha_{p_2 q}>2\pi$, a contradiction.

This remark is also a case of Theorem 3 in \cite{VZ}.
\end{remark}

\section{Proof of Theorem 1.1 and Theorem 1.2}

Starting from this section, we assume $\Sigma$ is a centrally symmetric convex polyhedral surface, and $\phi$ is the antipodal map defined on $\Sigma$. The number of conical points is even, so we write $M=2N$ for some positive integer $N$.

\subsection{$f$ has no generalized periodic points}

In this section we will show that $f$ has no generalized periodic point.

Notice that if $p\in f^k(p)$, then we also have $p\in F^{2k}(p)$. Thus, any generalized periodic point of $f$ is also a generalized periodic point of $F$. To prove Theorem 1.1, we first show that any generalized periodic point of F has order 2, and then show that such a point must be a generalized fixed point of $f$. 

\begin{lemma} If $p\in F^{k}(p)$ for some $k\geq 2$, then $p\in F^{2}(p)$. 
\end{lemma}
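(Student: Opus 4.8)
The plan is to exploit the monotonicity of the radius function $d$ along $F$-orbits (Lemma 2.1) together with the constraint that a generalized periodic point of $F$ forces an orbit of the form $p_0, p_1, \dots, p_k = p_0$. Along such a cyclic orbit, Lemma 2.1 gives $d(p_0) \le d(p_1) \le \cdots \le d(p_k) = d(p_0)$, so in fact $d(p_i)$ is constant along the entire cycle; call this common value $R$. Moreover, since $d(p_{i+1}) = \operatorname{dist}(p_i, p_{i+1})$ is the distance to a farthest point, constancy of $d$ means $\operatorname{dist}(p_i, p_{i+1}) = R$ for every $i$, and also $\operatorname{dist}(p_{i+1}, p_i) = R$ — i.e. $p_i$ is itself at the maximal distance $R$ from $p_{i+1}$. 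In other words, the relation "being a farthest point" becomes symmetric along the cycle: $p_i \in F(p_{i+1})$ and $p_{i+1} \in F(p_i)$ for every consecutive pair.

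Next I would bring in Remark 2.2 (the consequence of Lemma 2.7): if a point $q \notin \mathscr{C}$, then there is \emph{at most one} point $p$ with $q \in F(p)$. The strategy is to use this to collapse the cycle. Consider a vertex-free point $p_i$ in the cycle (if all $p_i$ are conical points one argues separately, using finiteness and the fact that no distance minimizer passes through a conical point by Lemma 2.3 — or one simply notes that the forward image under $F$ of a conical point generically avoids $\mathscr{C}$, so after one step we may assume the relevant points are not in $\mathscr{C}$). Since $p_{i-1} \in F$-preimage of $p_i$ and, by the symmetrization above, $p_{i+1} \in F(p_i)$ hence $p_i \in F(p_{i+1})$, we get two candidate preimages $p_{i-1}$ and $p_{i+1}$ of $p_i$; by uniqueness $p_{i-1} = p_{i+1}$. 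Running this around the cycle forces $p_0 = p_2 = p_4 = \cdots$ and $p_1 = p_3 = \cdots$, so the orbit is $2$-periodic: $p \in F^2(p)$. This is exactly the claim.

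The main obstacle I expect is handling the conical points cleanly: Remark 2.2 only applies to $q \notin \mathscr{C}$, so the uniqueness-of-preimage argument can fail at vertices, where several points may share a given conical point as their farthest point. I would deal with this by a short case analysis — either (a) show that along a cyclic $F$-orbit one can always find a vertex-free point and "rotate" the cycle so that the uniqueness argument applies at a non-conical point, chaining equalities $p_{i-1} = p_{i+1}$ from there; or (b) argue that if $p_i \in \mathscr{C}$ then its neighbors in the cycle also sit at maximal distance $R$ from it and invoke a direct geometric argument (e.g. via lunes and Lemma 2.6) that a conical point cannot be an "interior" link of a cycle of length $> 2$. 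A secondary, smaller point to verify is the base case: if the minimal period $n$ equals $1$ then $p \in F(p)$, which is impossible since $F$ has no generalized fixed point; and if $n = 2$ we are done; so we may assume $n \ge 3$ and derive a contradiction with the collapsing argument, or alternatively phrase the conclusion as "$p \in F^2(p)$" directly without minimality.
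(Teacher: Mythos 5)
Your first paragraph already contains the paper's entire proof, and it is complete as written: once you observe that $d$ is constant on the cyclic orbit, you get $\operatorname{dist}(p_0,p_1)=d(p_0)=d(p_1)$, so $p_0\in F(p_1)$, and together with $p_1\in F(p_0)$ this gives $p_0\in F^2(p_0)$ immediately. You should stop there. (One slight slip in wording: you wrote $d(p_{i+1})=\operatorname{dist}(p_i,p_{i+1})$; it is $d(p_i)=\operatorname{dist}(p_i,p_{i+1})$ because $p_{i+1}\in F(p_i)$ — but since $d$ is constant this does not affect anything.)

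Everything from ``Next I would bring in Remark 2.2\ldots'' onward is unnecessary and actually weakens the argument. You are trying to establish the stronger statement that the cycle is literally $2$-periodic ($p_{i-1}=p_{i+1}$ for all $i$), which is not what the lemma asks for. This detour forces you to grapple with the conical-point exception in Remark 2.2, and your two proposed workarounds (rotating the cycle to a vertex-free link, or a separate lune argument) are sketched rather than proved — in particular the parenthetical remark that ``the forward image under $F$ of a conical point generically avoids $\mathscr{C}$'' is not a valid step, since nothing generic is available for a single fixed orbit. None of this is needed: the lemma is $p\in F^2(p)$, not that the cycle has exact period $2$, and the first paragraph delivers that. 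Drop the second half and end the proof after deducing $p_0\in F(p_1)$.
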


\begin{proof} If $p\in F^k(p)$, then there is a finite sequence $p_1, p_2, \dots, p_{k-1}$ such that $p_1\in F(p)$, $p_n\in F(p_{n-1})$ (where $1<n<k$) and $p\in F(p_{k-1})$. By Lemma 2.1, 
$$d(p)\leq d(p_1) \leq \dots \leq d(p_{k-1}) \leq d(p)$$
Consequently, all $\leq$'s above are equalities. In particular, we have $d(p)\leq d(p_1)$.

By definition of $d$, $\operatorname{dist}(p, p_1)=\operatorname{dist}(p_1, p_2)$. Since $p_2\in F(p_1)$, this implies $p\in F(p_1)$, so $p\in F^2(p)$.

\end{proof}

\begin{lemma}
If $p\in F^{2}(p)$, then $\phi(p)\in F(p)$, or equivalently, $p\in f(p)$.
\end{lemma}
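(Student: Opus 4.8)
The plan is to start from the chain of equalities forced by Lemma 2.1: if $p \in F^2(p)$, pick $q \in F(p)$ with $p \in F(q)$, so that $d(p) = \operatorname{dist}(p,q) = d(q)$, and $q$ is itself a generalized fixed point of $F^2$. The key geometric fact I want to exploit is Lemma 2.7 together with its consequence in Remark 2.2: the point $q$ is ``extremal'' in the sense that it is the unique point having $p$ as a farthest point (at least when $q \notin \mathscr{C}$), and similarly $p$ is the unique point having $q$ as a farthest point. So I would first dispose of (or carefully handle) the degenerate case where $p$ or $q$ lies in $\mathscr{C}$, and then work with the ``generic'' case $p, q \notin \mathscr{C}$, where Lemma 2.4 gives at least three distance minimizers between $p$ and $q$.

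Next I would bring in central symmetry. The antipodal map $\phi$ is an isometry of $\Sigma$, so it carries distance minimizers to distance minimizers and $F$ to $F$: $F(\phi(x)) = \phi(F(x))$. Applying $\phi$ to the relation $q \in F(p)$, $p \in F(q)$ gives $\phi(q) \in F(\phi(p))$ and $\phi(p) \in F(\phi(q))$. The goal is to show $\phi(p) \in F(p)$, i.e.\ $\phi(p)$ is also a farthest point from $p$, which by the equality $\operatorname{dist}(p, \phi(p)) \le d(p) = \operatorname{dist}(p,q)$ really amounts to proving $\operatorname{dist}(p, \phi(p)) = \operatorname{dist}(p,q)$. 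So the heart of the matter is a distance comparison: I need to show the antipodal distance is at least as large as $d(p)$.

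To get that inequality I would use the lune structure around the pair $(p,q)$ and its $\phi$-image, the lune structure around $(\phi(p), \phi(q))$. Since $q \in F(p)$, the distance minimizers from $p$ to $q$ cut $\Sigma$ into lunes whose interiors partition the conical points; the same holds for the $\phi$-translated picture. Because $\phi$ is a fixed-point-free involution pairing up the $2N$ conical points and preserving angular deficits, I expect to combine Lemma 2.6 (the angle formula $\alpha_p + \alpha_q = \sum \delta_n$) applied to the two configurations with the constraint $\sum \delta_n = 4\pi$ from Gauss--Bonnet. The uniqueness from Remark 2.2 should force $\phi(p)$ to sit inside one of the lunes $\mathscr{L}_{pq}$ (or to coincide with a vertex in the exceptional case), and then a triangle-inequality / angle argument of the kind used in the proof of Remark 2.2 should yield $\operatorname{dist}(p,\phi(p)) \ge \operatorname{dist}(p,q)$, hence equality, hence $\phi(p) \in F(p)$.

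The main obstacle I anticipate is exactly this last distance comparison: showing that the antipode $\phi(p)$ cannot be strictly closer to $p$ than the farthest point $q$ is. Establishing $q \in F(p)$ implies $\phi(q) \in F(\phi(p))$ is routine; pinning down where $\phi(p)$ lies relative to the lunes of $(p,q)$ and extracting the inequality is where the real work is. I would also need to be careful about the case distinctions when $p$, $q$, $\phi(p)$, or $\phi(q)$ are conical points, and about the possibility $p = q$ (in which case $p \in F(p)$ already and $\operatorname{dist}(p,\phi(p)) \le d(p) = 0$ forces things trivially). A cleaner route, if available, would be to argue that $\{p,q\}$ and $\{\phi(p),\phi(q)\}$ must actually coincide as sets — so that $\phi$ swaps $p$ and $q$ — which would immediately give $\phi(p) = q \in F(p)$; I would try this first and fall back on the lune-counting argument if the identification $\{p,q\} = \{\phi(p),\phi(q)\}$ turns out to require the same machinery anyway.
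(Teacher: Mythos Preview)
Your proposal gathers most of the right tools (lunes, Lemma~2.6, Lemma~2.7, Gauss--Bonnet, the $\phi$-equivariance of $F$), and you correctly locate $\phi(p)$ in the interior of some lune $\mathscr{L}_{pq}$. But the target you set yourself --- proving the distance inequality $\operatorname{dist}(p,\phi(p)) \ge \operatorname{dist}(p,q)$ --- is not how the argument goes, and I do not see how a ``triangle-inequality / angle argument of the kind used in Remark~2'' would yield it. Remark~2 compares dihedral angles of two overlapping lune systems at a common apex; here the lune systems at $(p,q)$ and at $(\phi(p),\phi(q))$ share no vertex, so that mechanism does not apply. The route through Remark~2 that you sketch (forcing $\{p,q\}=\{\phi(p),\phi(q)\}$ by uniqueness) also stalls: from $q\in F(p)$, $p\in F(q)$ you only get $\phi(q)\in F(\phi(p))$, $\phi(p)\in F(\phi(q))$, and nothing of the form $q\in F(\phi(p))$ that uniqueness could bite on.

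The missing idea is this: assume $q\neq\phi(p)$ and pick one distance minimizer $g$ from $p$ to $\phi(p)$. Then $\phi(g)$ is a second, distinct distance minimizer from $p$ to $\phi(p)$, and the loop $g\cup\phi(g)$ cuts $\Sigma$ into two pieces that are swapped by $\phi$. Hence exactly one member of each antipodal pair of conical points lies on each side, so the curvature enclosed by the loop is $2\pi-\delta_p$. Since $g$ and $\phi(g)$ meet the boundary of $\mathscr{L}_{pq}$ only at $p$ (Lemma~2.5), the entire loop --- and therefore all the conical points it encloses, together with $\phi(p)$ itself --- lies inside $\mathscr{L}_{pq}$. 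Lemma~2.6 then gives $\alpha_p+\alpha_q\ge 2\pi$, contradicting $\alpha_p<\pi$ and $\alpha_q<\pi$ from Lemma~2.7 (which applies on both sides because $q\in F(p)$ \emph{and} $p\in F(q)$). No distance comparison is ever made; the contradiction is purely in the angle sum, and the construction $g\cup\phi(g)$ is what makes central symmetry do the work.
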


\begin{proof} Let $p$, $q$ be such that $q\in F(p)$ and $p\in F(q)$. We want to show $q=\phi(p)$.

Assume $q\neq\phi(p)$. Let $g$ be a distance minimizer joining $p$ and $\phi(p)$. Since $\Sigma$ is centrally symmetric, $\phi(g)$ is a distance minimizer joining $p$ and $\phi(p)$ distinct from $g$. Note that no distance minimizer from $p$ to $q$ can pass through $\phi(p)$, otherwise it intersects $g$ and $\phi(g)$ at $\phi(p)$, but does not contain both, hence a contradiction to Lemma 2.5. Therefore, $\phi(p)$ is in the interior of some lune $\mathscr{L}_{pq}$. 

Now the loop $g\cup\phi(g)$ divides $\Sigma$ into two parts that are antipodal images of each other. Thus, a conical point and its antipodal image must belong to different parts. The sum of the angular deficits of the conical points inside $g\cup\phi(g)$ is thus  $2\pi-\delta_p$, where $\delta_p$ is the angular deficit at $p$, and equals zero if $p\notin\mathscr{C}$.

\begin{figure}[h]
\centering
\includegraphics[width=0.38\textwidth]{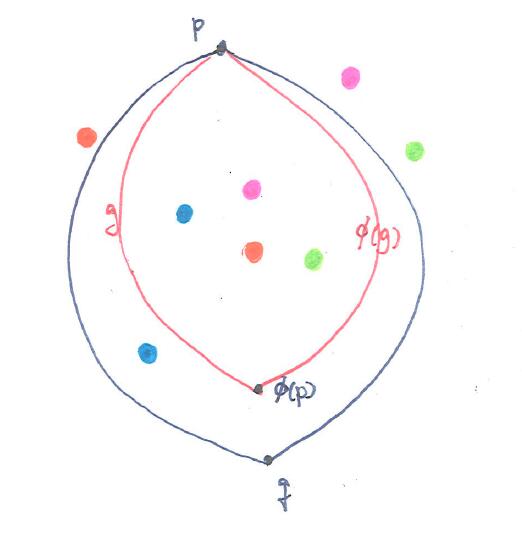}
\caption{This is a sketch of $\Sigma$ for the proof of Lemma 3.2: the colored dots are conical points, where two antipodal conical points have the same color; the two dark blue curves are distance minimizers connecting $p$ to $q$ and they form the boundary of $\mathscr{L}_{pq}$; the red curves $g$ and $\phi(g)$ form a loop $g\cup\phi(g)$ dividing $\Sigma$ into two centrally symmetric parts. Note that no two dots of the same color lie in the same part.}
\end{figure}

By lemma 2.5, $g$ and $\phi(g)$ only intersect the boundary of $\mathscr{L}_{pq}$ at $p$, so the conical points inside $g\cup\phi(g)$ are also inside $\mathscr{L}_{pq}$. Therefore, the sum of the angular deficits of the conical points inside $\mathscr{L}_{pq}$ and that of $\phi(p)$ is no less than $2\pi-\delta_p+\delta_p=2\pi$ (see Figure 3 for a demonstration). According to Lemma 2.6, $\alpha_p+\alpha_{q}\geq 2\pi$.

On the other hand, since $q\in F(p)$ and $p\in F(q)$, it follows from lemma 2.7 that $\alpha_p<\pi$ and $\alpha_{q}<\pi$, and consequently $\alpha_p+\alpha_q< 2\pi$, a contradiction. This proves the statement.
\end{proof}

Theorem 1.1 then follows from Lemma 3.1 and  Lemma 3.2.

\subsection{Limit Set of $f$ is the Generalized Fixed Point Set  of $f$}
In this section, we will show that the limit set of $f$ is equivalent to the set of generalized fixed points of $f$. Clearly, if $p$ is a generalized fixed point of $f$, it is also a limit point of $f$ (of the orbit $p, p, p, \dots$). Hence, it suffices to show that any limit point of $f$ must be a generalized fixed point of $f$.

Recall $d$ is continuous (Lemma 2.2) on a compact set, so it has an upper bound. Let $\{p_n\}_{n\in\mathbb{N}}$ be an orbit of $f$. The sequence $\{d(p_n)\}_{n\in\mathbb{N}}$ is monotone (Lemma 2.1) and bounded, hence it converges to some finite number. Then we get the following lemma:

\begin{lemma} Suppose $\overline{p}$ is a limit point of an orbit $\{p_n\}_{n\in\mathbb{N}}$ of $f$, and $\{d(p_n)\}_{n\in\mathbb{N}}$ converges to $L$. Then $d(\overline{p})=L$.
\end{lemma}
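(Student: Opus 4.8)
The plan is to show $d(\overline p) = L$ by a two-sided argument, using the continuity of $d$ (Lemma 2.2) together with the monotonicity of $d$ along the orbit (Lemma 2.1). Since $\overline p$ is a limit point of $\{p_n\}$, there is a subsequence $p_{n_k} \to \overline p$. By continuity of $d$, we get $d(p_{n_k}) \to d(\overline p)$. On the other hand, $\{d(p_n)\}$ is a convergent sequence with limit $L$, so every subsequence, in particular $\{d(p_{n_k})\}$, also converges to $L$. By uniqueness of limits, $d(\overline p) = L$.

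In slightly more detail, the key steps in order are: (1) extract the convergent subsequence $p_{n_k} \to \overline p$ guaranteed by the definition of limit point; (2) invoke Lemma 2.2 to conclude $\lim_k d(p_{n_k}) = d(\overline p)$ — here one may also cite the sharper Lipschitz bound $|d(p_{n_k}) - d(\overline p)| \le \operatorname{dist}(p_{n_k}, \overline p) \to 0$ from the proof of Lemma 2.2; (3) recall that $\{d(p_n)\}$ converges to $L$ by the remark preceding this lemma (monotone by Lemma 2.1, bounded above since $d$ is continuous on a compact space), so $\lim_k d(p_{n_k}) = L$ as a subsequence of a convergent sequence; (4) equate the two limits.

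This argument is almost entirely formal, so I do not anticipate a genuine obstacle; the only point that requires a moment's care is making sure the hypothesis "$\{d(p_n)\}$ converges to $L$" is legitimately available — but this is exactly what the paragraph immediately before the lemma establishes, and the lemma statement also assumes it outright. It is worth emphasizing in the writeup that we are \emph{not} claiming $p_{n_k+1} \to f(\overline p)$ or anything about the dynamics near $\overline p$; the statement is purely about the scalar sequence $d(p_n)$ and the value of $d$ at the limit point. That restraint is what keeps the proof to a few lines and sets up the later (harder) arguments that actually exploit this equality.

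\begin{proof}
By the definition of a limit point, there is a subsequence $\{p_{n_k}\}_{k\in\mathbb{N}}$ with $p_{n_k}\to\overline{p}$. By Lemma 2.2, $d$ is continuous, so $d(p_{n_k})\to d(\overline{p})$. On the other hand, $\{d(p_{n_k})\}$ is a subsequence of $\{d(p_n)\}$, which converges to $L$ by hypothesis; hence $d(p_{n_k})\to L$. Since a sequence in $\mathbb{R}$ has a unique limit, $d(\overline{p})=L$.
\end{proof}
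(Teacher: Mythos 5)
Your proof is correct and is exactly the argument the paper intends (the paper gives no explicit proof, treating the lemma as an immediate consequence of the continuity of $d$ from Lemma 2.2 and the convergence of $\{d(p_n)\}$ established in the preceding paragraph). Nothing further is needed.
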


Now we give a proof of Theorem 1.2.

\begin{proof} Let $\overline{p}$ be a limit point of an orbit $\{p_{n}\}$ of $f$. Choose a subsequence $\{p_{n_k}\}\subset\{p_{n}\}$ converging to $\overline{p}$. Without loss of generality, we may assume the sequence $\{\phi(p_{n_k+1})\}$ converges to some point $\overline{q}$, otherwise we replace it (and $\{p_{n_k}\}$ accordingly) by a convergent subsequence. Note that $\phi(\overline{q})$ is also a limit point of the orbit $\{p_n\}$,
so 
  
$$d(\overline{p})=d(\phi(\overline{q}))=d(\overline{q})$$ by Lemma 3.3 and symmetry.

Therefore,
$$\operatorname{dist}(\overline{p},\overline{q})=\lim_{n_{k}\to \infty}\operatorname{dist}(p_{n_{k}}, \phi(p_{n_{k}+1}))=\lim_{n_k\to \infty}d(p_{n_{k}})=d(\overline{p})=d(\overline{q})$$
where the second equality follows from $\phi(p_{n_k+1})\in F(p_{n_k})$ and the third equality from continuity of $d$ (Lemma 2.2).

This implies $\overline{q}\in F(\overline{p})$ and $\overline{p}\in F(\overline{q})$. By Lemma 3.2, $\overline{p}\in f(\overline{p})$, hence $\overline{p}$ is a generalized fixed point of $f$.
\end{proof}

\section{Star Unfolding and a Coordinate Representation of $f$}

\subsection{Computation of the Farthest Point Set}

The goal of this section is to introduce an idea based on which we wrote a computer program in Java to compute the farthest point set. We achieve this through a map that ``unfolds $\Sigma$ onto the plane", known as the star-unfolding. For simplicity, we will demonstrate the idea of computation for non-conical points, and the remaining finite cases can be worked out with very few adjustments. 

Let $M$ be a $(G,X)$ manifold. Choose a point $p\in M$ and a coordinate chart $\psi_p:U_p\to X$ containing $p$ (think of $\Sigma\setminus\mathscr{C}$ as a flat manifold). Let $q\in M$ be any other point, and $\gamma_{p,q}\subset M$ be a path from $p$ to $q$. Then $\psi_p$ can be analytically continued along $\gamma_{p,q}$ so that its domain is extended to cover $q$. The value of $\psi_p(q)$ depends only on the homotopy class of the path $\gamma_{p,q}$. Thus, if $M$ is simply connected, the map $q\mapsto\psi_p(q)$ is well-defined, called the \textit{developing map}.  In general, the developing map is well-defined on the universal cover $\widetilde{M}$, interpreted as the space of homotopy classes of paths in $M$ emanating from $p$. Developing map is uniquely determined by the basepoint $p$ and the chart $\psi_p$, but changing $p$ or $\psi_p$ only composes the original developing map with an element of $G$. The reader may refer to Chapter 3.5 of \cite{Th} for more details.

\begin{definition}[Angle from one geodesic to another] Let $g_1$ and $g_2$ be two geodesics meeting at a point $p\in\Sigma$. Suppose we rotate $g_1$ about $p$ on $\Sigma$ counter-clockwisely by an angle $\theta> 0$ so that it overlaps with $g_2$ near $p$. We call the smallest such $\theta$ the angle from $g_1$ to $g_2$ at $p$.
\end{definition}

Let $p\in\Sigma\setminus\mathscr{C}$ be a non-conical point. For each $n$ ($1\leq n\leq 2N$), choose a distance minimizer $[\phi(p), C_n]$ from the antipodal point $\phi(p)$ to $C_n$. Rename the conical points other than $C_1$ if necessary, we may assume that the angle from $[\phi(p), C_1]$ to $[\phi(p), C_n]$ at $\phi(p)$ is increasing with respect to $n$. 

By Lemma 2.5, if $n\neq m$, then $[\phi(p), C_n]\cap [\phi(p), C_m]=\{\phi(p)\}$. Thus, $\tau_p:=\bigcup\limits_{n=1}^{2N}[\phi(p), C_n]$ is an embedded tree in $\Sigma$, and $\Sigma\setminus\tau_p$ is simply connected. Then there exists a well-defined developing map $Dev_p:\Sigma\setminus\tau_p\to\mathbb{E}$ (think of $\Sigma\setminus\tau_p$ as a flat manifold). In the sequel, we refer to $Dev_p$ as a \textit{star-unfolding}, since its image resembles the shape of a star. In \cite{AO}, it is shown that a star-unfolding of any convex polyhedron is an embedding, so $Dev_p$ is an embedding. Figure 4 is a demonstration when $\Sigma$ is an octahedron, and $p$ is marked red in the first picture.

Let $\overline{Dev_p}$ be the multivalued-extension of $Dev_p$ to $\Sigma$ by continuity. Then $\overline{Dev_p}(\Sigma)$ is a Euclidean $4N$-gon $\bigstar_p$. Note that $\overline{Dev_p}(\Sigma)$ is multi-valued on $[\phi(p),C_n]-\{C_n\}$. In particular, at $\phi(p)$, $\overline{Dev_p}(\Sigma)$ has $2N$ images. We label them by $\phi_1(p), \phi_2(p), \dots, \phi_{2N}(p)$, such that $\phi_n(p)$ is adjacent to $\overline{Dev_p}(C_n)$ and $\overline{Dev_p}(C_{n+1})$ for all $n$, where the index $n+1$ should be understood modulo $2N$.

Note that the construction above also works when $p$ (and hence $\phi(p)$) is a conical point, by replacing all the $2N$'s with $(2N-1)$'s.

Recall that if $q\in f(p)$, then $q\in\mathscr{C}$ or there are at least three distance minimizers joining $\phi(p)$ and $q$ (Lemma 2.4). In the latter case, $q$ is not in the relative interior of $\tau_p$, otherwise for some $n$, $[\phi(p), C_n]$ intersects at $q$ a distance minimizer from $\phi(p)$ to $q$ but does not contain it, hence contradicts Lemma 2.5. Let $\mathcal{S}:=\{q\in\Sigma-\tau_p:$\textit{ there are at least three distance minimizers from $\phi(p)$ to $q\}$}. By the reasoning above, $f(p)\subset \mathcal{S}\cup\mathscr{C}$. We now turn to computing the set $\mathcal{S}$.  

\begin{figure}[h]
\centering
\includegraphics[width=0.7\textwidth]{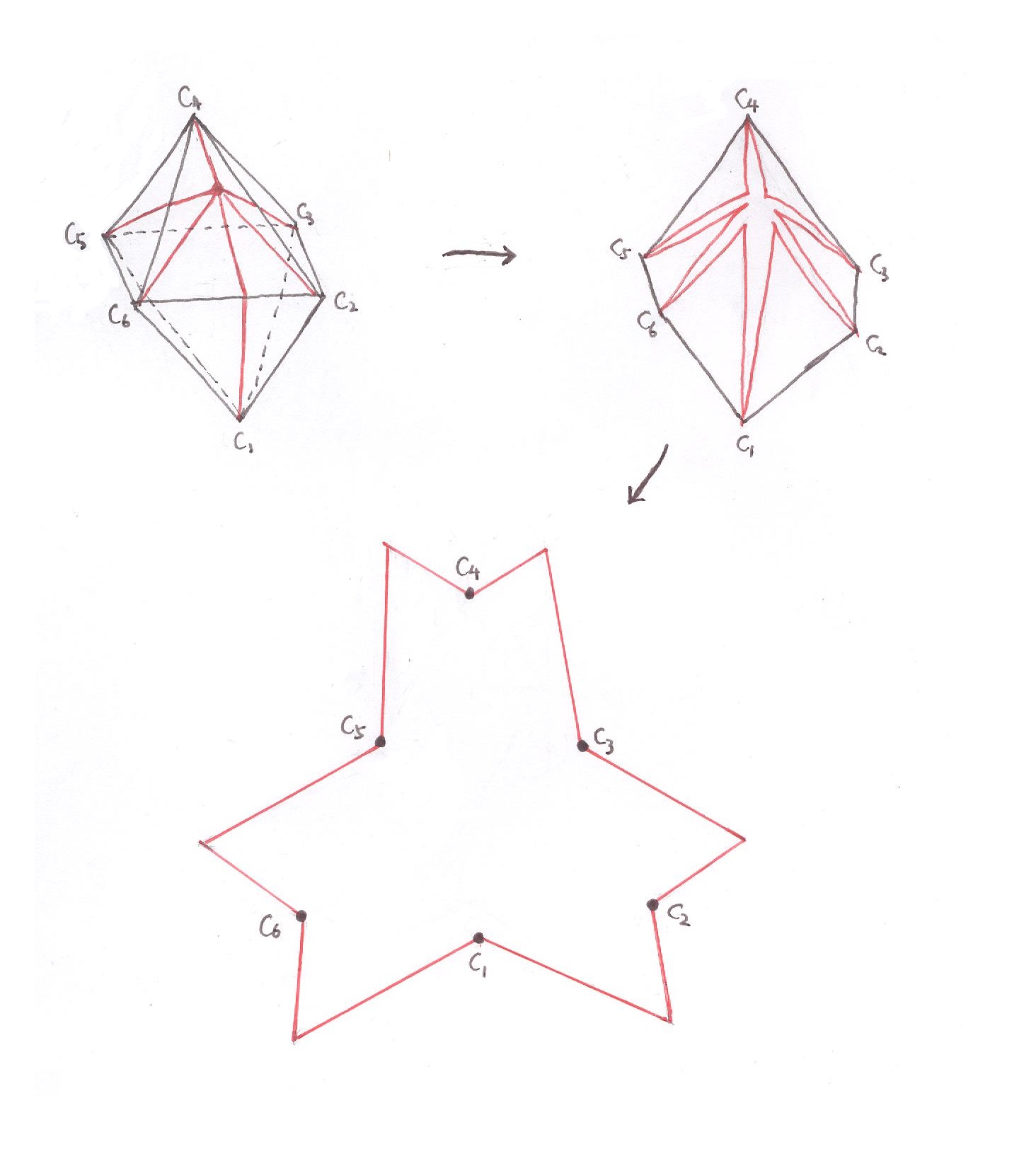}
\caption{The first picture is a sketch of $\tau_p$ in red. The conical points are named based on the rule in paragraph 2. The second picture is a visualization of ``cutting $\Sigma$ open along $\tau_p$", and when we flatten this onto the plane, we get the 12-gon in the third picture, the image of $Dev_p$. }
\end{figure}

\begin{definition}[$\bigstar_p$-path] Let $X, Y\in\bigstar_p$. A path joining $X$ and $Y$ is a $\bigstar_p$-path if it is contained in the interior of $\bigstar_p$ except for its endpoints.
\end{definition}

\begin{definition}[Good Triple] Let $i, j, k \in \{1,2,\dots,2N\}$ be distinct. We say that an unordered triple $\{i,j,k\}$ is a good triple at $p$ if 

1. $q_{ijk}$ is in the interior of $\bigstar_p$, where $q_{ijk}$ is the circumcenter of $\phi_i(p)$, $\phi_j(p)$ and $\phi_k(p)$.

2. $[q_{ijk},\phi_i(p)]$, $[q_{ijk},\phi_j(p)]$ and $[q_{ijk},\phi_k(p)]$ are $\bigstar_p$-paths, where $[q_{ijk},\phi_i(p)]$ is the Euclidean segment joining $q_{ijk}$ and $\phi_i(p)$, and similar for $[q_{ijk},\phi_j(p)]$ and $[q_{ijk},\phi_k(p)]$.

3. If $n\neq i, j, k$ and $[q_{ijk},\phi_n(p)]$ is a $\bigstar_p$-path, then $\norm{\phi_i(p)-q_{ijk}}\leq\norm{\phi_n(p)-q_{ijk}}$, where \norm{.} is the Euclidean norm, $q_{ijk}$, $\phi_i(p)$ and $\phi_i(p)$ are viewed as vectors in $\mathbb{R}^2$.
\end{definition}

Let $q\in\mathcal{S}$. We show that $q$ comes from a good triple. Choose three distinct distance minimizers $g_1,g_2,g_3$ from $\phi(p)$ to $q$. By Lemma 2.5, $g_1,g_2$ and $g_3$ intersect $\tau_p$ only at $\phi(p)$. Thus, $\overline{Dev_p}(g_1)$, $\overline{Dev_p}(g_2)$ and $\overline{Dev_p}(g_3)$ are $\bigstar_p$-paths joining $Dev_p(q)$ to $\phi_i(p),\phi_j(p)$ and $\phi_k(p)$ respectively for some triple $\{i,j,k\}$. Since $g_1,g_2$ and $g_3$ have the same length, this implies $Dev_p(q)$ is equidistant from $\phi_i(p),\phi_j(p)$ and $\phi_k(p)$, that is, $Dev_p(q)=q_{ijk}$. Since $q\in\Sigma-\tau_p$, $q_{ijk}$ is in the interior of $\bigstar_p$. Finally, if $[Dev_p(q),\phi_n(p)]$ is a $\bigstar_p$-path for some $n\neq i,j,k$, then its preimage under $\overline{Dev_p}$ is a path joining $\phi(p)$ and $q$, hence is no shorter than $g_1$ because $g_1$ is a distance minimizer. Therefore, $\norm{\phi_i(p)-q_{ijk}}\leq\norm{\phi_n(p)-q_{ijk}}$. As all the conditions in Definition 4.3 are satisfied, $\{i,j,k\}$ is a good triple at $p$.

Since $Dev_p$ is an embedding, every good triple $\{i,j,k\}$ determines a unique point ${Dev_p}^{-1}(q_{ijk})\in\mathcal{S}$. Combined with last paragraph, we see that the cardinality of $\mathcal{S}$ is no greater than the number of good triples at $p$ (in fact, this number is $\leq 2N-2$ by Lemma 7, \cite{R2}), hence $\mathcal{S}$ is a finite set.

Let $M_1=\max\limits_{q\in\mathcal{S}}\operatorname{dist}(\phi(p),q)$, and $M_2=\max\limits_{1\leq n\leq 2N}\operatorname{dist}(\phi(p),C_n)$. Then $f(p)$ is given by one of the following:

$f(p)=\{{\overline{Dev_p}}^{-1}(q_{ijk}): \{i,j,k\}\text{ is a good triple, } \norm{\phi_i(p)-q_{ijk}}=M_1\}$ if $M_1>M_2$;

$f(p)=\{C_n: \operatorname{dist}(\phi(p),C_n)=M_2\}$ if $M_1<M_2$; 

$f(p)$ is the union of the two sets above if $M_1=M_2$.

\subsection{$f$ is Piecewise Rational}

In this section, we present a method to cut $\Sigma$ into finitely many pieces, and in the interior of each piece $f$ is some rational function. The rationality result dates back to Jo\"el Rouyer's work in \cite{R1}, but we use a different construction to get a better description of the regions on which $f$ is rational. This construction is also necessary to prove Theorem 1.3 and 1.4. 

Fix a basepoint $p\in \Sigma\setminus\mathscr{C}$ and a coordinate chart $\psi_p:U_p\to\mathbb{E}$ of $p$. For any $q\in U_p$, choose $2N$ distance minimizers $[\phi(q), C_1]$, $[\phi(q), C_2],\dots, [\phi(q), C_{2N}]$, where the indices of the conical points other than $C_1$ are assigned according to the same rule as in Section 4.1: the angle from $[\phi(q), C_1]$ to $[\phi(q), C_n]$ at $\phi(p)$ is increasing as $n$ increases from $2$ to $2N$. For each $q$, let $Dev_q: \Sigma\setminus\bigcup\limits_{n=1}^{2N}[\phi(q), C_i]\to\mathbb{E}$ be the unique star unfolding such that $Dev_q=\psi_p$ in a small neighborhood of $q$. We also denote the images of $\phi(q)$ under $\overline{Dev_q}$ by $\phi_1(q),\phi_2(q),\dots,\phi_{2N}(q)$, so that $\phi_n(q)$ is adjacent to $\overline{Dev_q}(C_n)$ and $\overline{Dev_q}(C_{n+1})$. 

Consider the map $\psi_p(q)\mapsto\phi_n(q)$ for each $n$. Intuitively, if we perturb $q$, $\psi_p(q)$ and $\phi_n(q)$ should move by the same distance, except when there are multiple ways to assign indices to conical points (for instance, if there are at least two distance minimizers from $\phi(q)$ to some conical point, then there are at least two ways to assign indices), then there is an ambiguity in which way we should choose. We need a more precise statement to take care of this situation:

\begin{lemma}

There is a subdivision of $\Sigma$ into finitely many closed regions $R_s$ such that: $(1)$ there is an isometric embedding $\Psi_s: {R_s}^{\mathrm{o}}\to\mathbb{E}$, where ${R_s}^{\mathrm{o}}$ is the interior of $R_s$; $(2)$ for each $q\in {R_s}^{\mathrm{o}}$, if we choose the unique star unfolding $Dev_q$ such that $Dev_q=\Psi_s$ in a neighborhood of $q$, then for any $n\in\{1,2,\dots,2N\}$, the map $\Psi_s(q)\mapsto\phi_n(q)$ is an isometry on $\Psi_s({R_s}^{\mathrm{o}})$.

\end{lemma}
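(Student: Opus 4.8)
The plan is to build the subdivision explicitly from the ``combinatorial type'' of the star-unfolding and then check that the two claimed properties follow from the local rigidity of geodesics in a flat manifold with cone points. First I would fix the basepoint $p$ and chart $\psi_p$ and, for each index $n$, record the data that determines $\phi_n(q)$: the sequence of edges of $\Sigma$ (or, more intrinsically, the sequence of faces) crossed by the chosen distance minimizer $[\phi(q),C_n]$, together with which conical points lie to its left and right in the cyclic order around $\phi(q)$. Call this the \emph{combinatorial type} of $q$. The set of points $q$ where the distance minimizer to some $C_n$ is non-unique, or where the cyclic order of the directions $[\phi(q),C_1],\dots,[\phi(q),C_{2N}]$ at $\phi(q)$ changes, or where a distance minimizer $[\phi(q),C_n]$ passes through another conical point, is the ``bad set''; I claim this bad set is a finite union of algebraic arcs, and I would take the $R_s$ to be the closures of the connected components of its complement. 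The key input here is that, on a fixed flat chart, the locus where two candidate geodesics from $\phi(q)$ to a fixed $C_n$ (coming from two fixed developments/combinatorial routes) have equal length is an analytic curve (a bisector/hyperbola-type set), cut into finitely many pieces because there are only finitely many combinatorial routes of bounded length; antipodality is an isometry so $q\mapsto\phi(q)$ does not spoil this.

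Next, on the interior ${R_s}^{\mathrm{o}}$ of one such region I would define $\Psi_s$ simply as the restriction of $\psi_p$, analytically continued; since ${R_s}^{\mathrm{o}}$ is connected and (after shrinking if necessary, or because it avoids the bad set) can be taken to avoid $\tau_p$-type obstructions, this continuation is single-valued and is an isometric embedding into $\mathbb{E}$, giving (1). For (2), fix $q_0\in{R_s}^{\mathrm{o}}$; near $q_0$ the combinatorial type is constant, so the distance minimizer $[\phi(q),C_n]$ depends on $q$ by ``sliding'' through the same sequence of faces. Developing this picture via $Dev_q$ (normalized to agree with $\Psi_s$ near $q$), the point $\overline{Dev_q}(\phi(q))$ ``seen from the $C_n$-side'', i.e. $\phi_n(q)$, is obtained from $\Psi_s(q)$ by a fixed element of the holonomy group $G$ of the flat structure — the product of the rotations/reflections recorded when crossing the fixed sequence of faces/cuts in $\tau_q$. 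Concretely, $\overline{Dev_q}(C_n)=\Psi_s(q)+\ell_n(q)\,u$ for the appropriate development, and $\phi_n(q)$ is the image of $\Psi_s(q)$ under reflecting/rotating about that developed cone point through the fixed cone angle; composing these fixed isometries shows $\Psi_s(q)\mapsto\phi_n(q)$ is the restriction of a single element $T_{s,n}\in\mathrm{Isom}(\mathbb{E})$, hence an isometry on all of $\Psi_s({R_s}^{\mathrm{o}})$ once we know it is locally such and $\Psi_s({R_s}^{\mathrm{o}})$ is connected.

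The main obstacle I expect is controlling the bad set: one must argue that only \emph{finitely many} combinatorial types of distance minimizer $[\phi(q),C_n]$ occur as $q$ ranges over $\Sigma$, so that the equal-length loci and order-change loci together form a finite union of algebraic arcs rather than something dense. This should follow because a distance minimizer has length at most $\operatorname{diam}(\Sigma)$, and a shortest geodesic of bounded length on a polyhedral surface can cross each face only a bounded number of times (a geodesic re-entering a face would create a local shortcut, contradicting Lemma 2.3/Lemma 2.5 in spirit), so there are finitely many face-sequences to consider; combined with Lemma 2.7 / Remark 2.8 (bounding how distance minimizers to a non-conical point can coexist) one gets finiteness. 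A secondary technical point is the passage across the cut tree $\tau_q$: as $q$ moves within ${R_s}^{\mathrm{o}}$ the tree $\tau_q$ itself moves, so one must phrase $\Psi_s(q)\mapsto\phi_n(q)$ as ``develop $\Sigma$ cut along the \emph{fixed combinatorial} family of cuts,'' and check that no cut sweeps across $q$ — which is exactly what excluding the bad set buys us. Once these finiteness and genericity statements are in hand, properties (1) and (2) are routine consequences of the holonomy description above.
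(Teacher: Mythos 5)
Your overall strategy matches the paper's: cut $\Sigma$ along the locus where the combinatorial data of the star-unfolding changes, take the regions $R_s$ to be the closures of the components of the complement, develop each interior into the plane, and argue that on each piece the map $\Psi_s(q)\mapsto\phi_n(q)$ is realized by a fixed Euclidean isometry because the underlying combinatorics (equivalently, the holonomy of a fixed homotopy class of paths from $q$ to $\phi(q)$) stays constant. Indeed the paper's cut set $\mathscr{T}$ is exactly the union of the cut loci $\mathcal{T}_n$ of the conical points, which (by central symmetry $\phi(\mathscr{T})=\mathscr{T}$) coincides with your ``distance minimizer to some $C_n$ is non-unique'' locus; the paper treats ``$\mathcal{T}_n$ is a finite tree with leaves $\mathscr{C}\setminus\{C_n\}$'' as known, much as you treat finiteness of combinatorial types as a claim to be checked. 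Two of the three conditions in your ``bad set'' are actually vacuous or subsumed: by Lemma 2.3 a distance minimizer never passes through a conical point, and by Lemma 2.5 two distance minimizers from $\phi(q)$ share only the endpoint, so their directions at $\phi(q)$ are always distinct and the cyclic order cannot jump except across the cut locus you already excluded.

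The genuine gap is in your justification of part $(1)$. You assert that because ${R_s}^{\mathrm{o}}$ is connected and avoids ``$\tau_p$-type obstructions,'' the analytic continuation of $\psi_p$ is single-valued on it. That inference is false: the developing map of a flat surface with cone points is well-defined on a region exactly when the region is simply connected (more precisely, when every loop in it has trivial holonomy), and avoiding cuts says nothing about this. A priori a component of $\Sigma\setminus\mathscr{T}$ could be an annulus encircling a conical point, in which case the continuation of $\psi_p$ would be multi-valued no matter how you chose cuts. The paper closes this gap explicitly: it proves ${R_s}^{\mathrm{o}}$ is simply connected by observing that $\mathscr{T}$ is connected (each $\mathcal{T}_n$ is a tree whose leaves are all the other conical points, so the trees link up) and then applying the Jordan Curve Theorem on the sphere $\Sigma$ to rule out a nontrivial loop inside ${R_s}^{\mathrm{o}}$. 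Without an argument of this kind your construction of $\Psi_s$ is unsupported, and the rest of the proof (which relies on $\Psi_s$ being a single-valued chart) does not go through. A secondary issue: even once single-valuedness is known, $(1)$ asks for an isometric \emph{embedding}; a developing map of a simply connected flat region need not be injective in general, so some additional input (the paper ultimately relies on the convexity of ${R_s}^{\mathrm{o}}$, Proposition 4.2, or the Aronov--O'Rourke non-overlap theorem for the star-unfolding) is needed to finish $(1)$ cleanly.
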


\begin{proof}

If $C_n$ is a conical point, then the cut locus of $C_n$, the closure of the set of points in $\Sigma$ with more than one distance minimizers to $C_n$, is a finite tree $\mathcal{T}_n$ whose leaves are $\mathscr{C}\setminus C_n$. Let $\mathscr{T}=\bigcup\limits_{n=1}^{2N}\mathcal{T}_n$. Then $\Sigma\setminus\mathscr{T}$ is a finite union of disjoint open regions ${R_s}^{\mathrm{o}}$ . In Figure 5 we sketch $\mathscr{T}$, when $\Sigma$ are the surfaces of three anti-prisms with regular triangular bases and different heights.

\begin{figure}[h]
\centering
\includegraphics[width=1.0\textwidth]{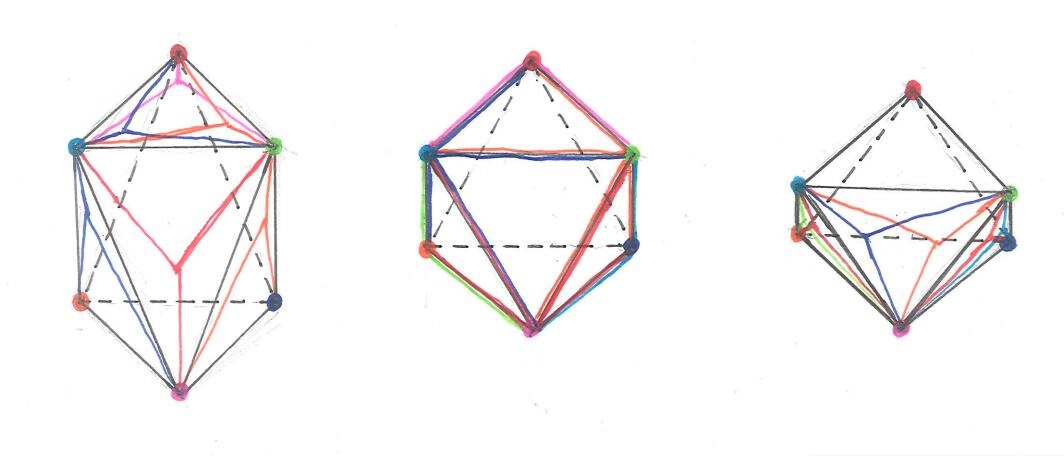}
\caption{A sketch of $\mathscr{T}$ when $\Sigma$ are anti-prisms with regular triangular bases of decreasing heights from left to right. The second anti-prism is a regular octahedron. The colored dots are conical points, and their corresponding cut loci trees are sketched with the same color. Observe that in case of a regular octahedron, $R_s$ is a regular triangle for all $s$. In the other two cases, $R_s$ can be a triangle, a quadrilateral or a hexagon. For the cleanness of the picture, we only sketch $\mathscr{T}$ on four front faces of $\Sigma$. The reader may get the rest half of the picture by symmetry.}
\end{figure}

Now we show ${R_s}^{\mathrm{o}}$ is simply connected (this would imply ${R_s}^{\mathrm{o}}$ is a topological disk by \textit{Uniformization Theorem}): Without loss of generality, suppose there is a nontrivial simple loop $l$ contained in ${R_s}^{\mathrm{o}}$, then since $\Sigma$ is a topological sphere, we can find two points lying on different sides of $l$ (\textit{Jordan Curve Theorem}), such that they belong to the boundaries of two regions distinct from $R_s$. Since $\mathscr{T}$ is connected, they are joined by a path in $\mathscr{T}$, but this path cuts $l$, a contradiction to $l\subset{R_s}^{\mathrm{o}}$. 

Since ${R_s}^{\mathrm{o}}$ is simply connected, let $\Psi_s$ be a developing map from ${R_s}^{\mathrm{o}}$ to $\mathbb{E}$. Then $R_s$ and $\Psi_s$ satisfy $(1)$ in the claim. 

We claim that throughout ${R_s}^{\mathrm{o}}$, there is a consistent way of assigning indices to all conical points. Suppose not, let $q_1,q_2\in {R_s}^{\mathrm{o}}$ be two points such that a certain conical point is assigned different indices at $q_1$ and $q_2$. Choose a path from $q_1$ to $q_2$ contained in ${R_s}^{\mathrm{o}}$. Then there exists a point $q_3$ on this path such that there are at least two distance minimizers from $\phi(q_3)$ to this conical point, so $q_3\in\mathscr{T}$, a contradiction.

Now for each $q\in{R_s}^{\mathrm{o}}$, We choose the unique star unfolding $Dev_q$ satisfying $Dev_q=\Psi_s$ in a neighborhood of $q$. Let $\phi_n(q)$ be the image of $\phi(q)$ under $\overline{Dev_q}$ adjacent to $\overline{Dev_q}(C_n)$ and $\overline{Dev_q}(C_{n+1})$.

Choose a basepoint $q_0\in R_s^{\mathrm{o}}$. Let $\gamma_{0}$ be a path joining $q_0$ and $\phi(q_0)$ such that $\overline{Dev_{q_0}}(\gamma_0)$ is a $\bigstar_{q_0}$-path joining $Dev_{q_0}(q_0)=\Psi_s(q_0)$ and $\phi_n(q_0)$. By symmetry, $R_{s'}:=\phi(R_s)$ is also a convex region with an isometric embedding $\Psi_{s'}:R_{s'}^{\mathrm{o}}\to\mathbb{E}$. Let $\Psi_{{s'},\gamma_{0}}:R_{s'}^{\mathrm{o}}\to\mathbb{E}$ be the analytic continuation of $\Psi_{s'}$ along $\gamma_{0}$. Let $I_n=\Psi_{s',\gamma_n}\circ\phi\circ{\Psi_s}^{-1}$. $I_n$ can be viewed as a ``coordinate representation of $\phi$ induced by $\gamma_{0}$''. It remains to show that $I_n$ coincides with the map $\Psi_s(q)\mapsto\phi_n(q)$ on $\Psi_s({R_s}^{\mathrm{o}})$ 

For each $q\in{R_s}^{\mathrm{o}}$, let $\gamma_q$ be a path from $q$ to $\phi(q)$ obtained by concatenating the following three: first a path from $q$ to $q_0$ contained in ${R_s}^{\mathrm{o}}$, then $\gamma_{0}$, and finally a path from $\phi(q_0)$ to $\phi(q)$ contained in ${R_{s'}}^{\mathrm{o}}$.

Observe that $I_n(\Psi_s(q))=\phi_n(q)$ if and only if $\gamma_q$ is homotopic to the preimage of a $\bigstar_q$-path joining $\Psi_s(q)$ and $\phi_n(q)$ under $\overline{Dev_q}$. By construction, $q_0$ satisfies this condition. 

Suppose $I_n(\Psi_s(q))\neq\phi_n(q)$ for some $q\in{R_s}^{\mathrm{o}}$. Then on any path joining $q_0$ and $q$ contained in ${R_s}^{\mathrm{o}}$, we can find some $q'$ with at least two non-homotopic classes of $\gamma_{q'}$ satisfying the condition above. This means at $q'$, the conical points can be indexed in two different ways, which is impossible. This finishes the proof of the lemma.
\end{proof}

\begin{remark}By a similar argument, we can show that for any $n$, the map $\Psi_s(q)\mapsto\overline{Dev_q}(C_n)$ is constant on ${R_s}^{\mathrm{o}}$: Let $\gamma_{q,n}$ be any path obtained by concatenating a path from $q$ to $q_0$ contained in ${R_s}^{\mathrm{o}}$ with the distance minimizer $[q_0,C_n]$. Then modify the argument in the proof of Lemma 4.1 by replacing all $\gamma_q$ with $\gamma_{q,n}$.

\end{remark}

\begin{definition} Define $C_n(s):=\overline{Dev_q}(C_n)$, where $q\in {R_s}^{\mathrm{o}}$. By the remark above, $C_n(s)$ is independent of which $q$ we choose.
\end{definition}

Notice that there is a unique way to extend $I_n$ to an orientation reversing Euclidean isometry. In the following text, we use $I_n$ to refer to this orientation reversing Euclidean isometry instead.\\

The following proposition is not relevant to the main results, but we present it for any interested reader.

\begin{prop}  ${R_s}^{\mathrm{o}}$ is a convex polygon for any $s$. 
\end{prop}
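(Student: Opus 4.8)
The plan is to use the developing map $\Psi_s$ of Lemma 4.1 to exhibit $\Psi_s(R_s^{\mathrm o})$ as a (bounded) intersection of finitely many half-planes. Recall that $R_s^{\mathrm o}$ is a connected component of $\Sigma\setminus\mathscr T$, where $\mathscr T=\bigcup_n\mathcal T_n$ and $\mathcal T_n$ is the cut locus of $C_n$; hence $\partial R_s$ is a finite union of arcs, each a sub-arc of an edge of some $\mathcal T_m$. Also recall (Definition 4.3 and the Remark preceding it) that for every $q\in R_s^{\mathrm o}$ the unique distance minimizer from $q$ to $C_n$ develops under $\Psi_s$ to the Euclidean segment $[\Psi_s(q),C_n(s)]$, so that $\operatorname{dist}(q,C_n)=\norm{\Psi_s(q)-C_n(s)}$ on $R_s^{\mathrm o}$; letting $q\to C_n$ shows in addition that the continuous extension of $\Psi_s$ sends the vertex $C_n$ of $\Sigma$ to $C_n(s)$ whenever $C_n$ is a vertex of $R_s$.

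First I would show that every boundary arc $e$ of $R_s$ develops to a straight segment. Say $e\subset\mathcal T_m$ and $e$ separates $R_s$ from a neighboring region $R_{s'}$. Since the relative interior $e^{\mathrm o}$ consists of flat points, $\Psi_s$ extends by continuity to $e^{\mathrm o}$ and then, across $e^{\mathrm o}$, to a local isometry on a one-sided collar $V\subset R_{s'}^{\mathrm o}$. For $x\in e^{\mathrm o}$ there are exactly two distance minimizers from $C_m$ to $x$; developing them under this extension sends $C_m$ to $C_m(s)$ along the one approached from $R_s^{\mathrm o}$, and to some second point $C_m'$ along the one approached from $R_{s'}^{\mathrm o}$, and since both minimizers have length $\operatorname{dist}(x,C_m)$ we get $\norm{\Psi_s(x)-C_m(s)}=\norm{\Psi_s(x)-C_m'}$. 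Because the assignment of the index $m$ is constant throughout $R_s^{\mathrm o}$ and throughout $R_{s'}^{\mathrm o}$ (as in the proof of Lemma 4.1), the points $C_m(s)$ and $C_m'$ do not depend on $x\in e^{\mathrm o}$, and $C_m(s)\ne C_m'$ (equality would force the two minimizers to coincide). Hence $\Psi_s(e)$ lies on the perpendicular bisector $\ell_e$ of $C_m(s)$ and $C_m'$.

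Next comes the containment statement. I would argue that $\Psi_s(R_s^{\mathrm o})$ lies in the open half-plane $H_e=\{\,y:\norm{y-C_m(s)}<\norm{y-C_m'}\,\}$ bounded by $\ell_e$. The point is that $C_m'$ is the $\overline{Dev_q}$-image of $C_m$ along a (generally non-minimizing) path from $q$ to $C_m$ — go from $q$ to a point of $e^{\mathrm o}$ staying inside $R_s^{\mathrm o}$, cross $e$, then follow the $R_{s'}$-side minimizer to $C_m$ — so that $\norm{\Psi_s(q)-C_m'}\ge\operatorname{dist}(q,C_m)=\norm{\Psi_s(q)-C_m(s)}$, with equality only if $q$ admits two distance minimizers to $C_m$, i.e. $q\in\mathcal T_m$, contradicting $q\in R_s^{\mathrm o}$. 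Granting this, $\Psi_s(R_s^{\mathrm o})\subset\bigcap_e H_e$, a finite intersection of open half-planes, hence convex, and bounded since $\Sigma$ is compact. A short open--closed argument — the closure of $\Psi_s(R_s^{\mathrm o})$ meets $\partial H_e$ only along $\Psi_s(e)\subset\ell_e$, which is disjoint from the open set $\bigcap_e H_e$ — then upgrades this to $\Psi_s(R_s^{\mathrm o})=\bigcap_e H_e$, so $R_s^{\mathrm o}$ develops to a bounded convex polygon. Alternatively, one can dispense with the global statement and only check, at each vertex $v$ of $R_s$, that its two incident boundary arcs place $\Psi_s(R_s^{\mathrm o})$ near $v$ on the correct side of the two bisector lines through $\Psi_s(v)$, whence the interior angle of $R_s$ at $v$ is at most $\pi$; together with the straightness of the edges this again yields convexity.

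The main obstacle is precisely the ``correct side'' claim used in the third paragraph: one must verify that the developed image $C_m'$ coming from the neighbor $R_{s'}$ is genuinely realized as the endpoint of a path from $q$ for \emph{every} $q\in R_s^{\mathrm o}$, not merely for $q$ near $e$, and that $\ell_e$ really separates all of $\Psi_s(R_s^{\mathrm o})$ from $C_m'$; this requires careful bookkeeping of how the various developed images of a fixed conical point change as one crosses its cut locus, and is where essentially all the work lies. Everything else — finiteness of the arcs, straightness, compactness, and the passage from ``all interior angles $\le\pi$'' to ``convex polygon'' — is routine.
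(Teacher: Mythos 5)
Your route is genuinely different from the paper's. You aim to exhibit $\Psi_s(R_s^{\mathrm o})$ directly as a finite intersection of open half-planes $H_e$, one per boundary arc $e\subset\mathcal T_m$ of $R_s$, each bounded by the perpendicular bisector of the two developed images $C_m(s)$ and $C_m'$ of $C_m$ across $e$. The paper instead bounds the interior angle of $R_s$ at each vertex by $\pi$: for non-conical vertices it invokes Theorem~10.2 of \cite{AO} (the star unfolding based at $C_m$ carries $\mathcal T_m$ onto a Voronoi diagram), and for conical vertices it runs a short direct computation with rays bisecting the cone angle. Your first stage is sound: each boundary arc develops to a sub-segment of such a bisector, $C_m(s)\neq C_m'$, and both are independent of the chosen point on the arc; the open--closed upgrade from $\subset$ to $=$ is also routine.

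The gap is where you flag it, but it is more than bookkeeping --- it is the heart of the matter, and the justification you offer for the key inequality is not valid. You write that $C_m'$ is the development of $C_m$ along a (generally non-minimizing) path from $q$, ``so that'' $\norm{\Psi_s(q)-C_m'}\ge\operatorname{dist}(q,C_m)$. This does not follow: developing $C_m$ along a path of length $L\ge\operatorname{dist}(q,C_m)$ places its image at Euclidean distance \emph{at most} $L$ from $\Psi_s(q)$, which bounds $\norm{\Psi_s(q)-C_m'}$ from above, not below, and gives nothing. To obtain the lower bound you need a path from $q$ to $C_m$ whose development under (the appropriate continuation of) $\Psi_s$ \emph{is} the straight segment $[\Psi_s(q),C_m']$, i.e.\ you need this segment to lie in the image of an isometric chart extending $\Psi_s$ --- a star-shapedness statement. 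Both $C_m(s)$ and $C_m'$ are developed images of $C_m$ under the star unfolding based at $C_m$, and your half-plane $H_e$ is exactly the assertion that $\Psi_s(R_s^{\mathrm o})$ lies in the Voronoi cell of $C_m(s)$ among those images; so your ``correct side'' claim \emph{is} Theorem~10.2 of \cite{AO}, not a consequence of elementary developing-map considerations. Either cite that theorem at this step (after which your route and the paper's rest on the same external input, with the paper's use of it being shorter), or be prepared to re-prove a substantial piece of the Aronov--O'Rourke nonoverlap theorem.
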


\begin{proof} In the proof of Lemma 4.1, we have shown that ${R_s}^{\mathrm{o}}$ is a topological disk. As ${R_s}^{\mathrm{o}}$ is bounded by straight segments, it is a polygon. Let $v\in\mathscr{T}$ be a vertex of ${R_s}^{\mathrm{o}}$. To show ${R_s}^{\mathrm{o}}$ is convex, we show that the interior angle of $R_s$ at $v$ is strictly less than $\pi$. 

By Theorem 10.2 in \cite{AO}, for any $n$, the image of $\mathcal{T}_n$ under the star unfolding $\overline{Dev_{\phi(c_n)}}$ is the Voronoi diagram of the $2(N-1)$ images of $C_n$ under $\overline{Dev_{\phi(C_n)}}$. Since Voronoi regions are convex, it suffices to consider the case where $v$ is a conical point with cone angle $\theta$, where $\theta<2\pi$.

Consider all conical points that are joined to $v$ by a unique distance minimizer. We index them by $C_1,C_2,\dots,C_K$ such that the angle from $[C_1,v]$ to $[C_n,v]$ at $v$ is positive and increasing when $n$ increases from $2$ to $K$, where $[C_n,v]$ ($1\leq n\leq K$) is the unique distance minimizer from $C_n$ to $v$. Note that every edge of the polyhedron $\Sigma$ incident to $v$ is such a distance minimizer. Then in a neighborhood of $v$, $[C_n,v]$ and $[C_{n+1},v]$ lie on the same face of the polyhedron $\Sigma$, otherwise there is an edge $[C_m, v]$ between them, a contradiction to the way we index them. Thus, the angle from $[C_n,v]$ to $[C_{n+1},v]$ at $v$ is at most $\frac{\theta}{2}$.

Let $r_n$ be the ray emanating from $v$, making an angle of $\frac{\theta}{2}$ with $[C_n,v]$. Now we show that there is a neighborhood $N_v$ of $v$ such that $N_v\cap\bigcup\limits_{n=1}^K r_n$ is contained in $N_v\cap\mathscr{T}$: if $p$ is a point on $r_n$ sufficiently close to $v$, then there are two geodesics from $C_n$ to $p$ of equal length, since the two triangles with vertices $C_n$, $v$ and $p$ are $SAS$-congruent. If we move $p$ away from $v$ along $r_n$, these two geodesics remain distance minimizers until the moment when there are three distance minimizers from $C_n$ to $p$.

Therefore, the interior angle of $R_s$ at $v$ is no larger than the angle from $r_n$ to $r_{n+1}$ (the indices should be understood mod $K$) at $v$, which is also the angle from $[C_n,v]$ to $[C_{n+1},v]$ at $v$. This angle is no larger than $\frac{\theta}{2}$ and hence strictly less than $\pi$. So ${R_s}^{\mathrm{o}}$ is a convex polygon.
\end{proof}

\begin{lemma} Suppose $i,j\in\{1,2,\dots,2N\}$ are distinct. Then $I_{j}{I_{i}}^{-1}$ is either a translation or a rotation by $\sum_{i+1}^j\delta_n$, the indices being understood mod $2N$.
\end{lemma}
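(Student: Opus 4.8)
The plan is to write $I_jI_i^{-1}$ as a telescoping composition, $I_jI_i^{-1}=\bigl(I_jI_{j-1}^{-1}\bigr)\bigl(I_{j-1}I_{j-2}^{-1}\bigr)\cdots\bigl(I_{i+1}I_i^{-1}\bigr)$ (with indices read mod $2N$), and then to analyze each factor $T_n:=I_nI_{n-1}^{-1}$ separately. The ``direct isometry'' half is immediate: since the antipodal map on the topological sphere $\Sigma$ reverses orientation while the developing maps preserve it, all the $I_n$ are Euclidean isometries of one common orientation type, so each $T_n$, and hence $I_jI_i^{-1}$, is orientation preserving, i.e.\ a translation or a rotation. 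Everything then reduces to pinning down the rotation angle, and for this I claim that each $T_n$ is the rotation about the point $C_n(s)=\overline{Dev_q}(C_n)$ through the cone angle $2\pi-\delta_n$ at $C_n$ (recall $C_n(s)$ is independent of $q\in R_s^{\mathrm{o}}$ by the Remark following Lemma 4.1).

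To prove this claim, recall first that $T_n$ satisfies $T_n(\phi_{n-1}(q))=\phi_n(q)$ for all $q\in R_s^{\mathrm{o}}$. In the polygon $\bigstar_q$, the leaf $C_n$ of the tree $\tau_q$ is incident to the single tree edge $[\phi(q),C_n]$, so the two boundary edges of $\bigstar_q$ at the vertex $C_n(s)$ are the two copies of this distance minimizer, and their far endpoints are exactly $\phi_{n-1}(q)$ and $\phi_n(q)$; hence both of these points lie at distance $\operatorname{dist}(\phi(q),C_n)$ from $C_n(s)$. Since $q\mapsto\phi_{n-1}(q)=I_{n-1}(\Psi_s(q))$ and $q\mapsto\phi_n(q)=I_n(\Psi_s(q))$ are isometric embeddings of $R_s^{\mathrm{o}}$, the direct isometry $T_n$ carries an open set to an open set while preserving the distance to $C_n(s)$; a short affine computation then forces $T_n(C_n(s))=C_n(s)$, and a direct isometry fixing a point is a rotation about it. For the angle, note that cutting the cone of angle $2\pi-\delta_n$ at $C_n$ along one ray from the apex opens it into a planar sector of that angle, so the interior angle of $\bigstar_q$ at the vertex $C_n(s)$ equals $2\pi-\delta_n$; since $\phi_{n-1}(q)$ and $\phi_n(q)$ are the two vertices of $\bigstar_q$ adjacent to $C_n(s)$, the rotation $T_n$ turns the ray from $C_n(s)$ toward $\phi_{n-1}(q)$ onto the ray from $C_n(s)$ toward $\phi_n(q)$ through this angle, so $T_n$ is a rotation through $2\pi-\delta_n\equiv-\delta_n\pmod{2\pi}$, i.e.\ through $\pm\delta_n$ with the sign determined by the orientation conventions.

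Finally, $I_jI_i^{-1}=T_j\cdots T_{i+1}$ is a composition of plane rotations through $2\pi-\delta_{i+1},\dots,2\pi-\delta_j$, and a composition of plane rotations is a rotation through the sum of the angles, or a translation when that sum is a multiple of $2\pi$; thus $I_jI_i^{-1}$ is a translation or a rotation through $\sum_{n=i+1}^{j}(2\pi-\delta_n)\equiv-\sum_{n=i+1}^{j}\delta_n\pmod{2\pi}$. Since $\sum_{n=1}^{2N}\delta_n=4\pi$ by Gauss--Bonnet, reading the index block the complementary way around, together with the orientation convention that fixes the overall sign, rewrites this as precisely the rotation by $\sum_{i+1}^{j}\delta_n$ in the statement. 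The step requiring the most care is the identification of $T_n$ as a specific rotation: establishing that its centre is $C_n(s)$ (the affine argument above) and, more delicately, keeping the orientation bookkeeping consistent throughout — the orientation of $\Psi_s$, the counter-clockwise ordering of the minimizers $[\phi(q),C_n]$, and the induced cyclic order of the vertices of $\bigstar_q$ — so that the sign of the rotation angle comes out as asserted. Conceptually, however, the claim is simply that passing between two adjacent sheets of the star-unfolding is the holonomy rotation around the cone point that separates them.
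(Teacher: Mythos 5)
Your proposal is correct and takes a genuinely different route from the paper. The paper's proof is a short holonomy argument: $I_i$ and $I_j$ are the parallel transports of $\phi$ along paths $\gamma_i,\gamma_j$, so $I_j I_i^{-1}$ is the holonomy around the loop $\gamma_i^{-1}\gamma_j$, which is a rotation by the sum of the angular deficits of the cone points this loop encloses (precisely $C_{i+1},\dots,C_j$ given the cyclic labeling), degenerating to a translation exactly when that sum is $2\pi$. You instead telescope $I_j I_i^{-1}=T_j\cdots T_{i+1}$ and identify each elementary factor $T_n=I_n I_{n-1}^{-1}$ concretely: since $\phi_{n-1}(q),\phi_n(q)$ are the two images of $\phi(q)$ adjacent to the vertex $C_n(s)$ of $\bigstar_q$ and equidistant from it, the direct isometry $T_n$ fixes $C_n(s)$ and is therefore the rotation about it through the interior angle $2\pi-\delta_n$, i.e.\ the local holonomy around the single cone point $C_n$. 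This is correct and arguably more illuminating, since it replaces a global "count what's inside the loop" step by an explicit picture of passing between adjacent sheets of the star-unfolding; the price is the extra bookkeeping of the composition and the orientation sign, which you flag honestly. Both proofs leave the sign of the rotation angle as an orientation convention — your $\sum(2\pi-\delta_n)\equiv-\sum\delta_n\pmod{2\pi}$ is the inverse rotation to the paper's $\sum\delta_n$, and as you note this is reconciled by Gauss--Bonnet (reading the complementary index block) and by which of $I_jI_i^{-1}$ versus $I_iI_j^{-1}$ one chooses — so the discrepancy is cosmetic rather than substantive, and for the only consequence the paper actually uses (translation iff the enclosed deficit is $2\pi$, in Lemma 4.3) the sign is irrelevant.
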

\begin{proof} Choose any $q\in {R_s}^{\mathrm{o}}$. By construction, $I_i$ and $I_j$ are induced by two paths $\gamma_i$ and $\gamma_j$, such that $\overline{Dev_q}(\gamma_i)$ and $\overline{Dev_p}(\gamma_j)$ are $\bigstar_q$-paths from $\Psi_s(q)$ to $\phi_i(q)$ and $\phi_j(q)$, respectively. Then $\sum_{i+1}^j\delta_n$ is the sum of the angular deficits of the conical points enclosed by the loop ${\gamma_{i}}^{-1}\gamma_{j}$. It follows that $I_{j}{I_{i}}^{-1}$ is a translation if and only if this sum is exactly $2\pi$.
\end{proof}

Now we associate to every point in ${R_s}^{\mathrm{o}}$ a pair of coordinates in $\mathbb{R}^2$ using $\Psi_s$. 

Let $i,j,k\in\{1,2,\dots,2N\}$ be mutually distinct. Consider a map $f_{ijk}$ that sends $(x,y)$ to the point equidistant from $I_i(x,y)$, $I_j(x,y)$ and $I_k(x,y)$ if they are non-collinear. 

\begin{lemma} 

$$f_{ijk}(x,y)=(\frac{Q_1(x,y)}{Q_3(x,y)}, \frac{Q_2(x,y)}{Q_3(x,y)})$$
where $Q_1, Q_2$ and $Q_3$ are polynomials in $x$ and $y$ of degree at most 2 depending on $i,j$ and $k$.

\end{lemma}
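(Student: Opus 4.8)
The plan is to use the standard linear-algebra formula for the circumcenter of a triangle and to exploit the fact that each $I_n$ is a Euclidean isometry, so its linear part is orthogonal.

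First I would write each $I_n$ in the form $I_n(v) = A_n v + b_n$, where $v = (x,y)^T$, $A_n \in O(2)$ and $b_n \in \mathbb{R}^2$; such a representation exists because $I_n$ is an affine isometry of the plane. Setting $P_n := I_n(v)$ for $n = i,j,k$, the point $O = f_{ijk}(x,y)$ is characterized by $\norm{O-P_i}^2 = \norm{O-P_j}^2 = \norm{O-P_k}^2$. Subtracting the first squared-distance equation from the other two and expanding, the term $\norm{O}^2$ cancels, leaving the linear system
\begin{equation*}
\begin{pmatrix} (P_j-P_i)^T \\ (P_k-P_i)^T \end{pmatrix} O = \frac12 \begin{pmatrix} \norm{P_j}^2 - \norm{P_i}^2 \\ \norm{P_k}^2 - \norm{P_i}^2 \end{pmatrix}.
\end{equation*}
Then I would apply Cramer's rule: each coordinate of $O$ becomes the ratio of a $2\times 2$ determinant to $Q_3 := \det\begin{pmatrix}(P_j-P_i)^T \\ (P_k-P_i)^T\end{pmatrix}$. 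This $Q_3$ is twice the signed area of the triangle $P_iP_jP_k$, hence it is nonzero precisely when $I_i(x,y), I_j(x,y), I_k(x,y)$ are non-collinear — exactly the hypothesis under which $f_{ijk}$ is defined — and it serves as the common denominator of both coordinates.

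It remains to bound the degrees. Since $P_j - P_i = (A_j - A_i)v + (b_j - b_i)$, every entry of the coefficient matrix is affine in $(x,y)$, so $Q_3$, a $2\times 2$ determinant of such entries, has degree at most $2$. For the numerators I would use orthogonality of $A_n$: $\norm{P_n}^2 = \norm{A_n v}^2 + 2(A_n v)\cdot b_n + \norm{b_n}^2 = (x^2+y^2) + 2(A_n v)\cdot b_n + \norm{b_n}^2$, so the quadratic part $x^2+y^2$ is independent of $n$ and cancels in each difference $\norm{P_j}^2 - \norm{P_i}^2$, which is therefore affine in $(x,y)$. Hence each numerator $Q_1, Q_2$ is a $2\times 2$ determinant whose entries are affine in $(x,y)$, so it too has degree at most $2$. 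The fact that the $I_n$ are taken to be the orientation-reversing isometries fixed in Section 4.2 changes nothing here, since only $A_n\in O(2)$ is used.

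The computation is routine; the point worth emphasizing is that a priori the circumcenter of three points varying affinely with $(x,y)$ could be a rational function of degree as high as $4$ in those parameters, and it is exactly the cancellation of $x^2+y^2$ — a consequence of each $I_n$ being an isometry rather than a general affine map — that forces both numerators and the common denominator down to degree at most $2$. Beyond carefully bookkeeping these determinants I do not expect any genuine obstacle.
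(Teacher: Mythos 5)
Your proof is correct, and it takes a genuinely different route from the paper's. The paper invokes Lemma 4.2 to identify $I_j I_i^{-1}$ and $I_k I_i^{-1}$ as rotations about fixed centers $z_{ij}$, $z_{ik}$ (after a WLOG to avoid the translation case), then realizes each perpendicular bisector as the line through the constant point $z_{ij}$ and a point that moves linearly with $(x,y)$, and computes the intersection via cross products of homogeneous coordinates — two linear vectors crossed give a quadratic one. You instead bypass Lemma 4.2 entirely: you write each perpendicular bisector in point--normal form, apply Cramer's rule, and observe that the orthogonality of the linear part $A_n$ forces $\|I_n(v)\|^2 = \|v\|^2 + (\text{affine})$, so the quadratic piece cancels in every difference $\|P_j\|^2 - \|P_i\|^2$ and all entries of the Cramer determinants are affine. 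Your route is more elementary and uniform (no case split over which composition $I_j I_i^{-1}$ might be a translation, and no appeal to the angular-deficit count), whereas the paper's route makes the rotation-center geometry explicit, which is reused informally later. The observation you emphasize at the end — that without the isometry hypothesis the degree would generically be $4$, and it is precisely the cancellation of $x^2+y^2$ that brings it to $2$ — is exactly the right thing to highlight.
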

\begin{proof}
According to Lemma 4.2, $I_{j}{I_{i}}^{-1}$, $I_{i}{I_{k}}^{-1}$ and $I_{k}{I_{j}}^{-1}$ are either rotation or translation, depending on the sum of the angular deficits of the conical points enclosed by ${\gamma_{i}}^{-1}\gamma_{j}$, ${\gamma_{k}}^{-1}\gamma_{i}$ and ${\gamma_{j}}^{-1}\gamma_{k}$. Since the total sum of the angular deficits is $4\pi$, at most one of these three isometries can be translation. Thus, without loss of generality, we assume $I_{j}{I_{i}}^{-1}$ is the rotation about $z_{ij}$ by $\theta_{ij}$, and $I_{k}{I_{i}}^{-1}$ is the rotation (with same orientation) about $z_{ik}$ by $\theta_{ik}$. 

Let $l_{ji}$ be the equidistant line from $I_{i}(x, y)$ and $I_{j}(x, y)$. Then $l_{ij}$ passes through $z_{ij}$ (whose coordinates $(c_1,c_2)$ are independent of $x$ or $y$), and the image of $I_{i}(x, y)$ under the rotation about $z_{ij}$ by $\frac{\theta_{ij}}{2}$, whose coordinates $(X_j,Y_j)$ depend linearly in $x$ and $y$. Similarly, the equidistant line $l_{ik}$ from $I_{i}(x, y)$ and $I_{k}(x, y)$ passes through $z_{ik}=(c_3,c_4)$ and the image of $I_{i}(x, y)$ under the rotation about $z_{ik}$ by $\frac{\theta_{ik}}{2}$, with coordinates $(X_k,Y_k)$. Then the circumcenter of $I_{i}(x, y)$, $I_{j}(x, y)$ and $I_{k}(x, y)$, provided they are non-collinear, is the intersection of $l_{ij}$ and $l_{ik}$. Its coordinates are given by $(\frac{v_x}{v_z},\frac{v_y}{v_z})$, where $v_x, v_y$ and $v_z$ are the $x,y,z$-components of the vector $$((X_j,Y_j,1)\times(c_1,c_2,1))\times((X_k,Y_k,1)\times(c_3,c_4,1))$$
Then it is not hard to check that $v_x, v_y$ and $v_z$ are at most quadratic polynomials in $x$ and $y$.
\end{proof}

For every $s$ and mutually distinct $i,j,k$, we define $$D(s)_{ijk}(p)=\norm{\phi_i(p)-f_{ijk}\circ\Psi_s(p)}=\norm{\phi_j(p)-f_{ijk}\circ\Psi_s(p)}=\norm{\phi_k(p)-f_{ijk}\circ\Psi_s(p)}$$ whenever $f_{ijk}$ is defined at $\Psi_s(p)$. 

Now consider the set of points $p\in {R_s}^{\mathrm{o}}$ satisfying an equation of any of the following three types:

\textbf{\textit{Type 1.}} $D(s)_{ijk}(p)=D(s)_{abc}(p)$, where $\{i,j,k\}$ and $\{a,b,c\}$ are distinct triples;

\textbf{\textit{Type 2.}} $D(s)_{ijk}(p)=\norm{\phi_n(p)-C_n(s)}$ for $C_n(s)$ in Definition 4.3;

\textbf{\textit{Type 3.}} $\norm{\phi_m(p)-C_m(s)}=\norm{\phi_n(p)-C_n(s)}$ where $m\neq n$.\\

Let $\mathcal{Z}_s\subset {R_s}^{\mathrm{o}}$ be the set of ``valid solutions'' to these equations as follows:

If $p$ is a solution to a \textit{Type 1} equation, then $p$ is valid if both $\{i,j,k\}$ and $\{a,b,c\}$ are good triples, and $D(s)_{ijk}(p)=d(p)$; If $p$ is a solution to a \textit{Type 2} equation, then $p$ is valid if $\{i,j,k\}$ is a good triple and $\norm{\phi_n(p)-C_n(s)}=d(p)$; If $p$ is a solution to a \textit{Type 3} equation, then $p$ is valid if $\norm{\phi_n(p)-C_n(s)}=d(p)$.

\begin{lemma} Let $\mathcal{Z}=\bigcup\limits_{s}\mathcal{Z}_s$. Then $f$ is single-valued and continuous on $\Sigma\setminus(\mathscr{T}\cup\mathcal{Z})$. 
\end{lemma}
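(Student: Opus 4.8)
The plan is to show that on the complement $\Sigma\setminus(\mathscr{T}\cup\mathcal{Z})$, the value $f(p)$ is determined by a single good triple or a single farthest conical point, with no ties, and that this assignment varies continuously. First I would fix a region $R_s$ and work inside ${R_s}^{\mathrm{o}}$, where by Lemma 4.1 the maps $\Psi_s(q)\mapsto\phi_n(q)$ are the rigid isometries $I_n$ and, by the Remark after Lemma 4.1, the images $C_n(s)=\overline{Dev_q}(C_n)$ are constant. In this chart, for every triple $\{i,j,k\}$ the candidate farthest point coming from three distance minimizers through $\phi_i,\phi_j,\phi_k$ is exactly $f_{ijk}\circ\Psi_s(p)$ when defined, with associated distance $D(s)_{ijk}(p)$; the candidate coming from a conical point $C_n$ has distance $\norm{\phi_n(p)-C_n(s)}$. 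By the analysis of Section 4.1, $f(p)$ consists of the preimages of those candidates (over good triples, resp. conical points) whose distance equals $d(p)=\max$ of all candidate distances. So $f(p)$ is multi-valued at $p$ precisely when two such maximizing candidates coincide in distance — i.e. when $p$ solves a Type 1, Type 2, or Type 3 equation with the realizing triples being good (resp. the conical distance being realized) and the common value equal to $d(p)$. That is exactly the defining condition for $p\in\mathcal{Z}_s$. Hence on ${R_s}^{\mathrm{o}}\setminus\mathcal{Z}_s$ there is a unique maximizing candidate and $f$ is single-valued there; taking the union over $s$ gives single-valuedness on $\Sigma\setminus(\mathscr{T}\cup\mathcal{Z})$.

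For continuity, I would argue locally near a point $p_0\in{R_s}^{\mathrm{o}}\setminus\mathcal{Z}_s$. Since $p_0\notin\mathcal{Z}_s$, there is a unique good triple $\{i,j,k\}$ (or unique conical index $n$) realizing $d(p_0)$, and the next-largest candidate distance is strictly smaller. I would check that the set of $p$ for which a given triple is good is (relatively) open: goodness is defined by the strict conditions that $q_{ijk}$ lies in the \emph{interior} of $\bigstar_p$, that the three segments are $\bigstar_p$-paths (open conditions, using that $Dev_p$ is an embedding and $\bigstar_p$ varies continuously — which follows from Lemma 4.1 since $\bigstar_p$ is the polygon with vertices $\phi_n(p)$ and $C_n(s)$), and that the minimality inequality $\norm{\phi_i(p)-q_{ijk}}\le\norm{\phi_n(p)-q_{ijk}}$ holds whenever the relevant segment is a $\bigstar_p$-path; the only way goodness could fail in the limit is through one of these becoming an equality or a path becoming tangent to the boundary, and I would note that at $p_0$ strict inequality/interiority holds because otherwise $p_0$ would lie on $\mathscr{T}$ or satisfy a Type 1/2/3 equation. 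Wait — I need to be careful: a limiting equality in the minimality condition could happen without $p_0\in\mathcal{Z}_s$, so the cleaner route is: the realized distance $d(p)$ equals $\max$ over candidates, each $D(s)_{ijk}$ and each $\norm{\phi_n(p)-C_n(s)}$ is continuous where defined, and by Lemma 2.2 $d$ is continuous; for $p$ near $p_0$ the candidate from $\{i,j,k\}$ stays defined and continuous, so $D(s)_{ijk}(p)\to D(s)_{ijk}(p_0)=d(p_0)$, while every other candidate stays strictly below, forcing $\{i,j,k\}$ to remain the unique maximizer and $f(p)=\overline{Dev_p}^{-1}(f_{ijk}\circ\Psi_s(p))$, which is continuous in $p$ because $f_{ijk}$ is rational (Lemma 4.4) and $\overline{Dev_p}^{-1}$ depends continuously on $p$ within $R_s$ (again by Lemma 4.1, since the star-unfolding data vary isometrically). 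The conical-point case is handled identically with $\norm{\phi_n(p)-C_n(s)}$ in place of $D(s)_{ijk}$.

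The main obstacle I anticipate is the bookkeeping at the seams: comparing candidates as $p$ ranges over ${R_s}^{\mathrm{o}}$ while the indexing $n\mapsto\phi_n(q)$, $C_n(s)$ stays consistent (this is exactly what Lemma 4.1 and its Remark buy us, so the quadratic formulas of Lemma 4.4 are genuinely valid on all of $\Psi_s({R_s}^{\mathrm{o}})$), and making sure that every way $f$ could jump or split has been captured by one of the three equation types. Concretely I must verify: (a) a jump without a value-tie is impossible because the set of $p$ at which a fixed triple is maximizing-and-good is relatively open and relatively closed in the complement of the union of the Type-1/2/3 valid-solution sets, hence a union of connected components; (b) the three equation types are exhaustive, i.e. any coincidence among the finitely many maximizing candidates is of Type 1 (two good triples tie), Type 2 (a good triple ties a conical point), or Type 3 (two conical points tie). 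Both of these are really just a careful restatement of the case analysis "$M_1>M_2$ / $M_1<M_2$ / $M_1=M_2$" from Section 4.1 together with the description of $f(p)$ as the preimages of the maximizing candidates, so once the continuity-of-star-unfolding-data point is nailed down the rest is routine. I would then conclude by taking the union over the finitely many regions $R_s$, noting $\Sigma\setminus(\mathscr{T}\cup\mathcal{Z})=\bigcup_s\big({R_s}^{\mathrm{o}}\setminus\mathcal{Z}_s\big)$, on each piece of which $f$ is single-valued and continuous.
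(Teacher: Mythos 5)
Your single-valuedness argument matches the paper's: arguing contrapositively, multi-valuedness at $p\in\Sigma\setminus\mathscr{T}$ forces a tie among the realized candidates, which is exactly a valid solution of a Type~1, Type~2, or Type~3 equation, placing $p\in\mathcal{Z}_s$. That part is correct and is essentially what the paper does (via the case split on whether $q_1,q_2$ are conical).

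For continuity, however, you take a different and substantially more labour-intensive route than the paper, and it has a gap you flagged but did not close. The paper's argument is abstract and short: if $F$ were discontinuous at $p\in\Sigma\setminus(\mathscr{T}\cup\mathcal{Z})$, choose $p_k\to p$ with $F(p_k)=\{q_k\}$ bounded away from $F(p)=\{q\}$, pass to a limit point $q'$ of $\{q_k\}$ by compactness, and use continuity of $d$ (Lemma~2.2) to get $\operatorname{dist}(p,q')=\lim\operatorname{dist}(p_k,q_k)=\lim d(p_k)=d(p)$, hence $q'\in F(p)$ with $\operatorname{dist}(q,q')\geq\epsilon$, contradicting single-valuedness. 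No unfolding data, no rational formula, no good-triple bookkeeping. Your approach instead tries to show that the maximizing good triple $\{i,j,k\}$ at $p_0$ persists on a neighborhood and then reads off continuity from the explicit formula $\overline{Dev_p}^{-1}\circ f_{ijk}\circ\Psi_s$. The unresolved issue is exactly the one you noticed: you argue that $D(s)_{ijk}$ remains the unique \emph{numerical} maximizer nearby, but that does not by itself imply $\{i,j,k\}$ remains a \emph{good} triple nearby. Goodness also requires $q_{ijk}$ to stay in the interior of $\bigstar_p$, the three segments to remain $\bigstar_p$-paths, and the minimality inequality against all other $\phi_n(p)$ to hold; you need to show these are open conditions at $p_0$ and that their failure would force $p_0$ onto $\mathscr{T}$ or into a Type~1/2/3 tie. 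That is true, but it is extra work that the paper never has to do (indeed the paper only establishes this kind of persistence afterwards, in Lemma~4.5, already \emph{using} the continuity from the present lemma), and in your write-up it is asserted rather than proved. The compactness argument is the cleaner route; use it.

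Incidentally, you should also double-check the identification $\Sigma\setminus(\mathscr{T}\cup\mathcal{Z})=\bigcup_s({R_s}^{\mathrm{o}}\setminus\mathcal{Z}_s)$ that you invoke at the end: this holds because $\Sigma\setminus\mathscr{T}=\bigcup_s{R_s}^{\mathrm{o}}$ by construction and each $\mathcal{Z}_s\subset{R_s}^{\mathrm{o}}$, but it is worth a sentence rather than leaving it implicit.
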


\begin{proof}
Assume $f$ is multi-valued at $p\in\Sigma\setminus\mathscr{T}$. Choose $s$ so that $p\in {R_s}^{\mathrm{o}}$. Let $q_1, q_2\in f(p)$ be distinct. Consider the following three cases:

1). $q_1,q_2\notin\mathscr{C}$. Then by Lemma 2.4, one can find three distinct distance minimizers from $\phi(p)$ to $q_1$, and another three distinct distance minimizers from $\phi(p)$ to $q_2$. Furthermore, $\operatorname{dist}(\phi(p),q_1)=\operatorname{dist}(\phi(p),q_2)$. Thus, there are two good triples $\{i,j,k\}$ and $\{a,b,c\}$ at $p$, such that $D(s)_{ijk}(p)=D(s)_{abc}(p)$. This implies $p$ is a valid solution to an equation of {\textit{Type 1}}.

2). Either $q_1\in\mathscr{C}$ or $q_2\in\mathscr{C}$, but not both. Then it is not hard to see that $p$ is a valid solution to an equation of \textit{Type 2}.

3). $q_1,q_2\in\mathscr{C}$. In this case, $p$ is a valid solution to an equation of \textit{Type 3}.

In short, the set of points in ${R_s}^{\mathrm{o}}$ where $f$ is multivalued is contained in $\mathcal{Z}_s$. This shows $f$ is single-valued on $\Sigma\setminus(\mathscr{T}\cup\mathcal{Z})$.

Assume $f$ is not continuous at $p\in\Sigma\setminus(\mathscr{T}\cup\mathcal{Z})$. Then $F$ is not continuous at $p$. By definition, there is an $\epsilon>0$ so that for every positive integer $k$, we can find $p_k\in\Sigma\setminus(\mathscr{T}\cup\mathcal{Z})$, where $dist(p,p_k)<1/k$ and $\operatorname{dist}(q,q_k)>\epsilon$, here $q$ and $q_k$ are the unique images of $p$ and $p_k$ under $F$. Let $q'$ be a limit point of $\{q_k\}_{k=1}^{\infty}$. Note that $\operatorname{dist}(q,q')\geq \epsilon$.

Let $d(p)$ be the radius at $p$. Since $d$ is continuous, 

$$\operatorname{dist}(p,q)=d(p)=\lim_{k\to\infty}d(p_k)=\lim_{k\to\infty}\operatorname{dist}(p_k,q_k)=\operatorname{dist}(p,q')$$.

Thus, $q'\in F(p)$. However, $\operatorname{dist}(q,q')\geq \epsilon$ so $F$ is multi-valued at $p$, a contradiction to the single-valuedness of $F$ at $p$. Therefore, $F$ and hence $f$ is continuous on $\Sigma\setminus(\mathscr{T}\cup\mathcal{Z})$. 
\end{proof}

\begin{remark} It turns out that the continuity of $F$ is proved in Lemma 1 of \cite{R1}. We keep the proof for the consistence in notations.
\end{remark}

\begin{lemma} $f$ is a rational function on each connected component of $\Sigma\setminus(\mathscr{T}\cup\mathcal{Z})$.
\end{lemma}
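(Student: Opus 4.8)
The plan is to show that on each connected component $\mathcal{O}$ of $\Sigma\setminus(\mathscr{T}\cup\mathcal{Z})$, the map $f$ is given by one of finitely many explicit rational formulas, and that this formula is locally constant on $\mathcal{O}$. First I would fix a component $\mathcal{O}$. Since $\mathscr{T}$ is the union of the cut loci of the conical points, $\mathcal{O}$ lies in the interior of some region $R_s$, so the isometric embedding $\Psi_s: R_s^{\mathrm o}\to\mathbb{E}$ and the induced orientation-reversing isometries $I_1,\dots,I_{2N}$ (so that $\phi_n(q)=I_n(\Psi_s(q))$ for all $q\in R_s^{\mathrm o}$, by Lemma 4.1) and the constants $C_n(s)$ (Definition 4.3) are all available. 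By Lemma 4.5, $f$ is single-valued on $\mathcal{O}$; write $q(p)$ for its unique value. For each $p\in\mathcal{O}$, $q(p)$ is either a conical point $C_n$ realizing $M_2=\max_n \operatorname{dist}(\phi(p),C_n)$, or a point of $\mathcal{S}$ arising from a good triple realizing $M_1$, exactly as in Section 4.1.

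Next I would argue that the "type" of $q(p)$ is locally constant on $\mathcal{O}$. Define, for each candidate output, a function on $\mathcal{O}$: for a conical point $C_n$, the distance $\operatorname{dist}(\phi(p),C_n)=\norm{\phi_n(p)-C_n(s)}$; for a triple $\{i,j,k\}$, the value $D(s)_{ijk}(p)$ whenever $f_{ijk}$ is defined at $\Psi_s(p)$ (by Lemma 4.4 this is a rational function of the coordinates, hence continuous where defined). On $\mathcal{O}$ none of the relevant equalities of Types 1–3 among these functions can hold with the "valid solution" condition, since such a point would lie in $\mathcal{Z}_s\subset\mathcal{Z}$. Also, whether a given triple $\{i,j,k\}$ is a good triple at $p$ is an open condition in $p$ on the set where $f_{ijk}$ is defined (conditions 1 and 2 of Definition 4.3 are open; condition 3 is a non-strict inequality, but an equality case there again forces $p$ into $\mathcal{Z}_s$ via a Type 1 relation, or can be ruled out as in the proof of Lemma 4.5). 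Therefore the winning candidate — the conical point or good triple whose associated function equals $d(p)$ — cannot change as $p$ varies continuously in the connected set $\mathcal{O}$: a change would require two candidates to tie at some point of $\mathcal{O}$, contradicting $\mathcal{O}\cap\mathcal{Z}=\emptyset$, or would require a triple to stop being good, which again happens only on $\mathcal{Z}_s$ or where $f_{ijk}$ is undefined (a proper algebraic subset, excluded since $f$ is continuous there by Lemma 4.5, so $f_{ijk}$ stays defined on all of $\mathcal{O}$).

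Once the type is fixed on $\mathcal{O}$, the conclusion is immediate. If $q(p)\equiv C_n$ for a fixed $n$, then $f\equiv C_n$ on $\mathcal{O}$, a constant, hence rational. If $q(p)$ comes from a fixed good triple $\{i,j,k\}$, then in the coordinates supplied by $\Psi_s$ we have $\Psi_{s'}(f(p)) = f_{ijk}(\Psi_s(p))$ — the point equidistant from $I_i(\Psi_s(p)),I_j(\Psi_s(p)),I_k(\Psi_s(p))$ — composed with $\overline{Dev}^{-1}$, and by Lemma 4.4 this is $\bigl(Q_1/Q_3,\,Q_2/Q_3\bigr)$ with $Q_1,Q_2,Q_3$ polynomials of degree $\le 2$. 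Thus $f$ is a rational function of the coordinates on $\mathcal{O}$, as claimed.

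The main obstacle I anticipate is the bookkeeping in the middle step: making fully rigorous the claim that the "active" good triple (or conical point) is globally constant on $\mathcal{O}$. One must be careful that condition 3 of "good triple" is a non-strict inequality, that $f_{ijk}$ can fail to be defined (collinearity of the three images), and that the max $M_1$ versus $M_2$ comparison could in principle flip; each of these potential transitions must be traced back to a Type 1, 2 or 3 valid solution, i.e. to a point of $\mathcal{Z}_s$, or excluded by the continuity of $f$ established in Lemma 4.5. Handling the collinearity locus and the boundary of "good" carefully — rather than the algebra of Lemma 4.4, which is routine — is where the real work lies.
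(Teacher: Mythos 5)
Your proposal mirrors the paper's proof: fix a basepoint in a component $\mathcal{U}\subset R_s^{\mathrm{o}}$, show that the active good triple (or the conical-point value) is constant on $\mathcal{U}$ by tracing any transition back to a Type~1/2/3 valid solution, hence a point of $\mathcal{Z}$, and then invoke the rationality of $f_{ijk}$. The one detail worth correcting is the chart on the target: the paper does not use $\Psi_{s'}$ but instead defines a fresh chart $\Phi$ on $f(\mathcal{U})$ by $\Phi(q)=Dev_p(q)$ where $f(p)=\{q\}$, and verifies that $\Phi$ is injective and a local isometry (using $q\notin\mathscr{C}$ and $q\notin\tau_p$), since $f(\mathcal{U})$ need not lie in any single $R_{s'}^{\mathrm{o}}$.
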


\begin{proof} 

Let $\mathcal{U}$ be a connected component of $\Sigma\setminus(\mathscr{T}\cup\mathcal{Z})$, so $\mathcal{U}\subset{R_s}^{\mathrm{o}}$ for some $s$.

Choose a $p_0\in\mathcal{U}$. Since $f$ is single-valued at $p_0$ by Lemma 4.4, we assume $f(p_0)=\{q_0\}$ . If $q_0\notin \mathscr{C}$, we will show that there exists a rational map $f_{ijk}$, together with a chart map $\Phi:f(\mathcal{U})\to\mathbb{E}$ such that $f={\Phi}^{-1}\circ f_{ijk}\circ\Psi_s$. Otherwise, if $q_0\in \mathscr{C}$, we will show that $f$ is constant on $\mathcal{U}$.

\textit{Case 1. $q_0\notin \mathscr{C}$:}

From Section 4.1, we know that there exists a good triple $\{i,j,k\}$ and a point $q_{0_{ijk}}=f_{ijk}\circ\Psi_s(p_0)$ such that $q_0={Dev_{p_0}}^{-1}(q_{0{ijk}})$. In fact, such triple is unique, otherwise $p_0\in\mathcal{Z}_s\subset\mathcal{Z}$ since it satisfies an equation of \textit{Type 1}.

Now for any $p\in \mathcal{U}$, define $q_{ijk}=f_{ijk}\circ\Psi_s(p)$. We claim that the following inequalities hold throughout $\mathcal{U}$:

\textit{1)} $D(s)_{ijk}(p)>D(s)_{abc}(p)$ for any other good triple $\{a,b,c\}$ at $p$ if exists;

\textit{2)} $D(s)_{ijk}(p)>\norm{\phi_n(p)-C_n(s)}$ for all $n$;

\textit{3)} $D(s)_{ijk}(p)<\norm{\phi_n(p)-q_{ijk}}$ whenever $n\neq i,j,k$ and $[q_{ijk},\phi_n(p)]$ is a $\bigstar_p$-path.

Clearly, at $p_0$, all the three inequalities hold. Since the maps involved in \textit{1)} and \textit{2)} are continuous on $\mathcal{U}$, \textit{1)} and \textit{2)} must hold throughout $\mathcal{U}$, otherwise we can find a $p'\in\mathcal{U}$ satisfying an equation of \textit{Type 1} or \textit{Type 2}, a contradiction. Finally, \textit{3)} means ${\overline{Dev_p}}^{-1}([q_{ijk},\phi_i(p)]),{\overline{Dev_p}}^{-1}([q_{ijk},\phi_j(p)])$ and ${\overline{Dev_p}}^{-1}([q_{ijk},\phi_k(p)])$ are distance minimizers. If \textit{3)} does not hold everywhere on $\mathcal{U}$, then given the continuity of $\phi$ and $f$, there is a $p''\in\mathcal{U}$ such that $\phi(p'')$ is joined to $f(p'')$ by at least four distance minimizers, again $p''$ satisfies a \textit{Type 1} equation, a contradiction. 

From this claim, it follows that at any $p\in\mathcal{U}$, $\{i,j,k\}$ is the unique good triple, so $f(p)$ consists of a single element $q={Dev_p}^{-1}(q_{ijk})$. In addition, we claim $q\notin\mathscr{C}$: if $q_{ijk}=C_n(s)$ for some $n$, then there are two (if $n$ equals one of $i,j,k$) or three (if $n\neq i,j,k$) distance minimizers joining $\phi(p)$ and $C_n$. By definition, $\phi(p)\in\mathscr{T}$, hence $p\in\mathscr{T}$ by symmetry, contradicting $p\in\mathcal{U}$. This observation is important for \textit{Case 2} later.

Now we define a map $\Phi$ on $f(\mathcal{U})$ as follows: if $f(p)=\{q\}$, then $\Phi(q):=Dev_p(q)$. Note that $f={\Phi}^{-1}\circ f_{ijk}\circ\Psi_s$, where $\{i,j,k\}$ is the unique good triple throughout $\mathcal{U}$, and $f_{ijk}$ is a rational function by Lemma 4.3. It remains to show $\Phi$ is a chart map.

Note that $f_{ijk}$ is injective on $\Psi_s(\mathcal{U})$, since $f$ is injective on $\mathcal{U}$ (see Remark 2).

We first show $\Phi$ is injective. Assume $f(p_1)=\{q_1\}$,  $f(p_2)=\{q_2\}$ and $Dev_{p_1}(q_1)=Dev_{p_2}(q_2)$. Then $f_{ijk}\circ\Psi_s(p_1)=f_{ijk}\circ\Psi_s(p_2)$. By injectivity of $f_{ijk}$ and $\Psi_s$, we have $p_1=p_2$, hence $q_1=q_2$.

Then we show $\Phi$ is a local isometry. Fix a $p\in\mathcal{U}$. We have seen that $q$ is not a conical point. In Section 4.1 (the paragraph before Definition 4.2), we have seen that if $q$ is not a conical point, then $q\notin\tau_p=\bigcup\limits_{n=1}^{2N}[\phi(p), C_n]$. Consequently, there is a neighborhood $N_p\subset\mathcal{U}$ of $p$, such that if $p'\in N_p$, then $Dev_p=Dev_{p'}$ on $f(N_p)$. That is, $\Phi=Dev_p$ on $f(N_p)$, so $\Phi$ is a local isometry.

In conclusion, $f={\Phi}^{-1}\circ f_{ijk}\circ\Psi_s$ on $\mathcal{U}$, where $\Psi_s$ and $\Phi$ are both chart maps. 

\textit{Case 2. $q_0\in \mathscr{C}$:}

In \textit{Case 1}, we showed that if $q_0\notin \mathscr{C}$, then $f(\mathcal{U})$ does not contain any conical point. Thus, if $q_0\in \mathscr{C}$, then $f(\mathcal{U})$ consists of conical points only. Since $f$ is continuous on $\mathcal{U}$ by Lemma 4.4, the image of $f$ must be a single conical point throughout $\mathcal{U}$, hence $f$ is constant.

Combined with \textit{Case 1}, we have shown that $f$ is rational on any connected component of $\Sigma\setminus(\mathscr{T}\cup\mathcal{Z})$.
\end{proof}

Note that if $f$ is single-valued at $p\in\mathcal{Z}\setminus\mathscr{T}$, then the coordinates of $p$ satisfies an equation $f_{ijk}(p)=f_{ijl}(p)$ for some mutually distinct $i,j,k,l\in\{1,2,\dots,2N\}$:  

If $f$ is single-valued at $p$, then it is impossible that $C_m$ and $C_n$ are both farthest points from $\phi(p)$, so $p$ can not be a solution of \textit{Type 3} equations only. In addition, if $p$ satisfies \textit{Type 2} equations only, then there are at least two distance minimizers from $\phi(p)$ to $C_n$, so $p\in\mathscr{T}$. Thus, $p$ satisfies an equation of \textit{Type 1}. Since $f$ is single-valued, there are at least four distance minimizers joining $\phi(p)$ and its farthest point, and the conclusion follows.

By Theorem 3 of \cite{R2}, the solution to the equation $f_{ijk}(p)=f_{ijl}(p)$ can not contain an open set. By Theorem 5 of \cite{Z}, $f$ is single-valued outside a $\sigma$-porous set. Therefore, $\mathscr{T}\cup\mathcal{Z}$ is at most one-dimensional.

In Figure 6, we use the computer program to plot the set where $f$ cannot be locally represented by a rational function on the surface of a regular octahedron. The set consists of three types of curves: the multi-valued set (red), the limit set (blue),  and the third type that is neither (green). In the interior of each region bounded by these curves, $f$ is rational.

We compute these special curves as follows: First, we compute the regions $\{R_s\}$. As the second picture of Figure 5 shows, each $R_s$ is a regular triangle. Next, we compute the valid solutions $\mathcal{Z}_s$ for all $s$. We then test a large number of points on each curve. If $p$ has multiple farthest points, it belongs to the multi-valued set and is colored red. In particular, if $p$ is a valid solution to $f_{ijk}(p)=f_{ijl}(p)$, we test whether it is fixed by $f$. If so, it belongs to the limit set and is colored blue. Otherwise, it is colored green.

The program also works for other centrally symmetric octahedra with conical angles all equivalent to $\frac{4\pi}{3}$, and theoretically, the algorithm applies to all centrally symmetric convex polyhedra. However, we expect the program to be much slower with the increasing number of conical points.

\begin{figure}[h]
\centering
\includegraphics[width=0.80\textwidth]{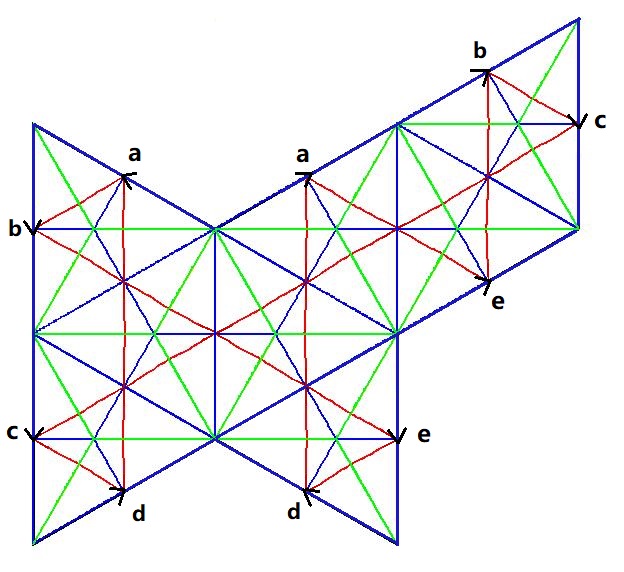}
\caption{In case $\Sigma$ is a regular octahedron, $f$ is represented by a rational function in the interior of each region bounded by the colored curves. To get the regular octahedron, two edges with the same letter should be glued along the indicated direction of the arrows. The blue lines are the limit set of $f$, the red curves are the multi-valued set of $f$, and the green ones are neither. }
\end{figure}

\section{Proof of Theorem 1.3 and 1.4}

In this section we prove Theorem 1.3 and 1.4 based on the following lemma:

\begin{lemma}Suppose $\overline{p}$ is a limit point of some orbit of $f$ through $p$ and $\overline{p}\notin \mathscr{C}$, then there are at least four distance minimizers joining $\overline{p}$ to $\phi(\overline{p})$. 
\end{lemma}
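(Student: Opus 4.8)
The plan is to upgrade the bound "at least three distance minimizers" provided by Lemma 2.4 to "at least four" by exploiting the central symmetry of $\Sigma$: the set of distance minimizers between a point and its antipode is invariant under $\phi$, and $\phi$ acts on it freely, so this set has even cardinality.

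First I would invoke Theorem 1.2. Since $\overline{p}$ is a limit point of an orbit of $f$, it lies in the limit set of $f$, hence it is a generalized fixed point of $f$; that is, $\phi(\overline{p})\in F(\overline{p})$. Because $\overline{p}\notin\mathscr{C}$ and $\phi$ is an isometry carrying $\mathscr{C}$ to $\mathscr{C}$, we also have $\phi(\overline{p})\notin\mathscr{C}$. Applying Lemma 2.4 to the pair $\overline{p}$, $\phi(\overline{p})$ then produces a set $G$ of at least three distinct distance minimizers joining $\overline{p}$ and $\phi(\overline{p})$.

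Next I would show $\phi$ acts as an involution on $G$. If $g\in G$, then $\phi(g)$ is a path joining $\phi(\overline{p})$ and $\phi(\phi(\overline{p}))=\overline{p}$ of the same length $\operatorname{dist}(\overline{p},\phi(\overline{p}))$, hence $\phi(g)$ is again a distance minimizer between $\overline{p}$ and $\phi(\overline{p})$, i.e. $\phi(g)\in G$; since $\phi^2=\mathrm{id}$, the map $g\mapsto\phi(g)$ is an involution of $G$. It remains to check that this involution has no fixed point, which forces $|G|$ to be even and therefore $|G|\geq 4$. Suppose $\phi(g)=g$ as subsets of $\Sigma$ for some $g\in G$. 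Then $\phi$ restricts to an isometry of the arc $g$ that interchanges its two endpoints $\overline{p}$ and $\phi(\overline{p})$, hence fixes the midpoint of $g$. But $\phi$ is the restriction to $\Sigma$ of the reflection through the center of symmetry of the polyhedron, and that center lies in the interior of the polyhedron, so $\phi$ has no fixed point on $\Sigma$ — a contradiction.

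The only delicate point is this last step, ruling out a $\phi$-invariant distance minimizer; the rest is a direct combination of Theorem 1.2, Lemma 2.4, and the parity argument.
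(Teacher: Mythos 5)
Your proof is correct and follows the same strategy as the paper's: combine Theorem~1.2 with Lemma~2.4 to get at least three minimizers, then use central symmetry to force even parity. The paper compresses the parity step into ``is even by symmetry''; your unpacking of it via the fixed-point-free involution $g\mapsto\phi(g)$ on the (finite) set of minimizers is exactly the intended justification.
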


\begin{proof} By Theorem 1.2, $\overline{p}\in f(\overline{p})$, so $\phi(\overline{p})\in F(\overline{p})$. The number of distance minimizers from $\overline{p}$ to $\phi(\overline{p})$ is at least 3 by Lemma 2.4, and is even by symmetry. So this number is at least 4.
\end{proof}

\subsection{Each Orbit of $f$ is a Convergent Sequence}

We begin the proof of Theorem 1.3 with the following lemma.

\begin{lemma} The set of limit points of an orbit of $f$ is finite, hence discrete.
\end{lemma}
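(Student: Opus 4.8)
The plan is to exploit the structure obtained in Section 4 together with Lemma 5.1. By Theorem 1.2, every limit point of an orbit is a generalized fixed point of $f$, so it suffices to bound the number of generalized fixed points that can arise as limit points of a single orbit. There are two sources of such points: the conical points $\mathscr{C}$, which form a finite set and can be discarded immediately; and the non-conical generalized fixed points, for which Lemma 5.1 says that $\overline p$ is joined to $\phi(\overline p)$ by at least four distance minimizers. I would first argue that a non-conical limit point $\overline p$ lies in some region interior ${R_s}^{\mathrm o}$ (a point of $\mathscr T$ has two distance minimizers to some conical point, and $\phi$ preserves $\mathscr T$, but a limit point need not a priori avoid $\mathscr T$ — so the first genuine step is to show limit points of an orbit avoid $\mathscr T\setminus\mathscr C$, or else treat the finitely many components of $\mathscr T$ directly).

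The heart of the argument is algebraic. Fix $s$ and work in the coordinates $\Psi_s$ on ${R_s}^{\mathrm o}$. A non-conical point $\overline p\in{R_s}^{\mathrm o}$ with at least four distance minimizers to $\phi(\overline p)$ yields two distinct good triples, say $\{i,j,k\}$ and $\{i,j,l\}$ sharing an edge, with $f_{ijk}\circ\Psi_s(\overline p)=f_{ijl}\circ\Psi_s(\overline p)$; moreover being a generalized fixed point pins down $f_{ijk}\circ\Psi_s(\overline p)$ to equal $\Psi_s(\phi(\overline p))$, i.e. $I_m^{-1}$ applied to the circumcenter lands back on $\Psi_s(\overline p)$ itself for the appropriate index. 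By Lemma 4.3 each $f_{ijk}$ is a rational map of degree at most $2$, so each of these is a polynomial equation in the two coordinates $(x,y)$ of $\overline p$. I would show that this system is not identically satisfied on ${R_s}^{\mathrm o}$ — this is exactly the content cited from Theorem 3 of \cite{R2} and Theorem 5 of \cite{Z}, namely that $\mathscr T\cup\mathcal Z$ is at most one-dimensional, and in fact that the solution set of $f_{ijk}(p)=f_{ijl}(p)$ contains no open set. Hence for each fixed choice of region $s$ and of the finitely many index tuples, the set of candidate limit points is contained in a proper algebraic subvariety of ${R_s}^{\mathrm o}$, a union of finitely many algebraic curves.

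This is not yet finiteness — a curve has infinitely many points. The remaining step, and the one I expect to be the main obstacle, is to upgrade "lies on a curve" to "finite": I would impose the additional constraint coming from the generalized fixed point condition $d(\overline p)=L$ (where $L=\lim d(p_n)$ is the common limit along the orbit, which by Lemma 3.3 every limit point of that orbit shares). On each of the finitely many algebraic curves, the function $\Psi_s(p)\mapsto D(s)_{ijk}(p)$ restricts to an analytic function, and if it is nonconstant on that curve it has only finitely many preimages of the value $L$; so one must rule out the possibility that a whole curve of generalized fixed points all have the same radius $L$. I would handle this by a connectedness/monotonicity argument: if a nontrivial arc of limit points of one orbit all had radius $L$, one could derive a contradiction with the uniqueness statement in Remark 1 (Lemma 2.7's corollary — at most one $p$ has a given non-conical point as a farthest point) combined with the fact that along such an arc the farthest-point correspondence would have to vary continuously, forcing $\phi$ to identify distinct parameter values of the arc with a common farthest point, which is impossible. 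Thus each curve contributes only finitely many limit points of a given orbit; summing over the finitely many regions $R_s$ and finitely many index tuples (plus the finitely many conical points) gives that the limit set of the orbit is finite, hence discrete.
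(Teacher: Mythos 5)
Your overall framing is right — localize to the regions $R_s$, invoke Lemma 5.1 to get at least four distance minimizers between $\overline p$ and $\phi(\overline p)$, and use the fact that all limit points of a single orbit share the common radius $L=\lim d(p_n)$ — but the route you take through the good triples and the rational maps $f_{ijk}$ introduces a gap that you yourself flag and do not close. The obstruction is real: a whole arc of generalized fixed points of $f$ is \emph{not} ruled out by Remark~1. On a hypothetical arc $\gamma(t)$ of generalized fixed points, each $\gamma(t)$ has $\phi(\gamma(t))$ as its farthest point, and since $\phi$ is injective these are pairwise distinct, so Remark~1's ``at most one $p$ with a given non-conical farthest point'' is never violated. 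The continuity-of-the-correspondence argument you sketch therefore does not produce the contradiction you want, and in fact Theorem~1.4 shows the limit set as a whole genuinely does sit on curves; what makes the limit points of one \emph{orbit} finite is precisely the extra equation $d(\overline p)=L$, and the way to exploit it is not to restrict $D(s)_{ijk}$ to a curve and count preimages of $L$ (for that you would still need a nonconstancy statement you haven't proved).

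The paper avoids the issue entirely by working directly with the orientation-reversing isometries $I_m$ rather than with $f_{ijk}$. For a generalized fixed point $\overline p$ with $\Psi_s(\overline p)=(x,y)$, the distance minimizer unfolding along $\gamma_m$ gives the single equation $\norm{I_m(x,y)-(x,y)}=L$. Writing $I_m$ in normal form $(x,y)\mapsto(x+b,-y)$ (reflection composed with a translation along the axis), this becomes $b^2+4y^2=L^2$: the solution set is at most two lines parallel to the axis of $I_m$. Because $I_iI_j^{-1}$ is a rotation (Lemma 4.2), the axes of $I_i$ and $I_j$ are not parallel, so the two conditions $\norm{I_i(x,y)-(x,y)}=L$ and $\norm{I_j(x,y)-(x,y)}=L$ together give at most four points. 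Summing over the finitely many index pairs and finitely many $R_s$ yields finiteness directly, with no need for the appeals to \cite{R2} and \cite{Z} (which only give one-dimensionality, not finiteness) or for the unresolved nonconstancy step. If you want to salvage your approach, the missing lemma is exactly that the equation $\norm{I_i(x,y)-(x,y)}=L$ defines a degenerate conic whose axis depends on $i$ — which is the paper's computation in disguise — so you may as well use it directly.

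One smaller point: your worry about limit points lying on $\mathscr T\setminus\mathscr C$ is not fully dispatched by ``treat the finitely many components of $\mathscr T$'' — the trees in $\mathscr T$ have infinitely many points. The paper handles the boundary case simply by using the continuous extension $\overline{\Psi_s}$ of $\Psi_s$ to the closed region $R_s$, so that the equations above continue to make sense on $\partial R_s$ and the same counting applies.
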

\begin{proof} In Section 4.2, we decompose $\Sigma$ into finitely many closed connected regions $R_s$ with isometric embeddings $\Psi_s:{R_s}^{\mathrm{o}}\to\mathbb{E}$. Let $\overline{\Psi_s}$ be the continuous extension of $\Psi_s$ to $R_s$. It suffices to show that there are finitely many limit points of an orbit of $f$ in $R_s$. 

Let $\overline{p}\in R_s$ be a limit point of an orbit $\{p_n\}$ of $f$ through $p$. We assume $\overline{p}\notin \mathscr{C}$, since this doesn't affect finiteness. By Lemma 3.3, $d(\overline{p})=L$, where $L=\lim_{n\to\infty}d(p_n)$. By Lemma 5.1, there are four distinct distance minimizers $g_1, g_2, \phi(g_1)$ and $\phi(g_2)$ joining $\overline{p}$ and $\phi(\overline{p})$. Let $I_i, I_j, I_k$ and $I_l$ be the orientation reversing Euclidean isometries obtained by extending the local ``coordinate representations of $\phi$'' induced by $g_1, g_2, \phi(g_1)$ and $\phi(g_2)$ (see Lemma 4.1 for precise definitions). Since the lengths of these four distance minimizers are all $L$, the coordinates $(x,y)$ of $\overline{\Psi_s}(\overline{p})$ in $\mathbb{R}^2$ must satisfy
\[
\norm{I_i(x,y)-(x,y)}=\norm{I_j(x,y)-(x,y)}=\norm{I_k(x,y)-(x,y)}=\norm{I_l(x,y)-(x,y)}=L 
\tag{$\star$}
\]

Since $\norm{I_i(x,y)-(x,y)}=\norm{I_k(x,y)-(x,y)}$ and $\norm{I_j(x,y)-(x,y)}=\norm{I_l(x,y)-(x,y)}$ by symmetry, this reduces to 

$$\norm{I_i(x,y)-(x,y)}=\norm{I_j(x,y)-(x,y)}=L$$

Consider first the equation $\norm{I_i(x,y)-(x,y)}=L$. As $I_i$ is an orientation-reversing Euclidean isometry, we can write it as a composition of a reflection and a translation. Furthermore, we can choose the reflection axis such that the translation vector is parallel to it. Without loss of generality, suppose this reflection axis is the x-axis. Then we can write $I_i(x,y)=(x+b,-y)$, so $\norm{I_i(x,y)-(x,y)}=L$ if and only if 
$$L^2=\norm{I_i(x,y)-(x,y)}^2=\norm{(x+b,-y)-(x,y)}^2=b^2+4y^2$$

Then $y$ can take at most two values, while $x$ can be arbitrary. So the solution set is empty, one line or a union of two lines, parallel to the axis of reflection. 

Similarly, the solution set to $\norm{I_j(x,y)-(x,y)}=L$ is empty, one line or a union of two lines parallel to the reflection axis of $I_j$. Note that the reflection axis of $I_j$ is not parallel to that of $I_i$, since $I_i{I_j}^{-1}$ is a rotation rather than translation. Therefore, the intersection of the solution sets to these two equations consists of at most $4$ points. 

On $R_k$, there are finitely many choices of $i,j,k$ and $l$, yielding finitely many equations of type $(\star)$. As each equation has at most $4$ solutions, there are finitely many limit points of a given orbit.
\end{proof}

\begin{proof}[Proof of Theorem 1.3] Let $\{p_n\}$ be an orbit of $f$ through $p$. We want to show $\{p_n\}$ is a convergent sequence.

 Let $\overline{p}$ be a limit point of $\{p_n\}$. We claim that for every $\epsilon >0$, there is an integer $N_0$ such that $\operatorname{dist}(p_n, \overline{p})< \frac{\epsilon}{2}$ for all $n>N_0$. This will imply $\operatorname{dist}(p_n,p_m)<\epsilon$ for all $n,m>N_0$, so $\{p_n\}$ is Cauchy.
 
 To prove this claim, let $\epsilon_0$ be such that there are no other limit point than $\overline{p}$ in the $\epsilon_0$-neighborhood of $\overline{p}$. This can be achieved by Lemma 5.2. 
 
 Given any $\epsilon >0$, let $\epsilon '=min\{\epsilon, \epsilon_0\}$. Define a set $$S_{\epsilon '}=\{p_n: \operatorname{dist}(p_n, \overline{p})\geq\frac{\epsilon '}{2}, \operatorname{dist}(p_{n-1}, \overline{p})<\frac{\epsilon '}{2}\}$$
 
 Assume $S_{\epsilon '}$ is an infinite set. Then there is a subsequence $\{p_{n_j}\}$ of $S_{\epsilon '}$ converging to some point $\overline{q}$. Since $\operatorname{dist}(p_{n_j}, \overline{p})\geq \frac{\epsilon '}{2}$, $\overline{q}\neq \overline{p}$. Now $\overline{p}$ and $\overline{q}$ are both limit points of the same orbit, so $d(\overline{p})=d(\overline{q})$ (Lemma 3.3). In addition, $d(\phi(\overline{q}))=d(\overline{q})$ by symmetry. Therefore, 
 $$\operatorname{dist}(\overline{p},\phi(\overline{q}))=\lim_{n_j\to\infty}\operatorname{dist}(p_{n_j-1}, \phi(p_{n_j}))=\lim_{n_j\to\infty}d(p_{n_j-1})=d(\overline{p})=d(\phi(\overline{q}))$$
 where the first equality follows from $\lim_{n_j\to\infty}\{p_{n_j-1}\}=\overline{p}$, since $\operatorname{dist}(p_{n_j-1}, \overline{p})<\frac{\epsilon'}{2}$ for all $n_j$, and $\overline{p}$ is the only limit point in its $\epsilon'$-neighborhood; the second equality comes from $\phi(p_{n_j})\in F(p_{n_j-1})$; the third is due to the continuity of $d$. 
 
 Thus, we conclude that $\phi(\overline{q})\in F(\overline{p})$ and $\overline{p}\in F(\phi(\overline{q}))$, so that $\overline{p}=\overline{q}$ by Lemma 3.2. However, at the beginning of our assumption, we showed that $\overline{q}\neq \overline{p}$. Therefore, $S_{\epsilon '}$ must be a finite set.
 
 Since $S_{\epsilon '}$ is finite, we take $N_0$ to be the maximal number such that $p_{N_0}\in S_{\epsilon '}$. Then $\operatorname{dist}(p_n, \overline{p})<\frac{\epsilon '}{2}$ for all $n>N_0$, since otherwise we will reach a contradiction to the the maximality of $N_0$. Thus the claim is proved.
 \end{proof}

\subsection{Limit Set of $f$ is Contained in at most Quadratic Curves}

Lemma 5.1 implies that the limit set of $f$ is contained in the set $\mathcal{Z}$ of Section 4.2 because they satisfy equations of \textit{Type 1}. In this section, we will show that the limit set is actually a finite union of generalized hyperbolas (that is, including the union of two crossing lines), hence of degree at most 2. 

In Section 5.1, we have seen that if $\overline{p}$ is a  limit point of $f$ in $R_s$, then the coordinates of $\Psi_s(\overline{p})=(\overline{x},\overline{y})$ is a solution to
$$\norm{I_i(x,y)-(x,y)}=\norm{I_j(x,y)-(x,y)}$$
where $I_i$ and $I_j$ are orientation reversing Euclidean isometries induced by two distance minimizers joining $\overline{p}$ and $\phi(\overline{p})$, and ${I_i}^{-1}I_j$ is a rotation. 


\begin{lemma}
By choosing the origin and the real axis appropriately, we can write 

$I_i(z)=e^{-i\alpha}\overline{z}+R_1e^{-i\alpha/2}$

$I_j(z)=e^{i\alpha}\overline{z}+R_2e^{i\alpha/2}$

where $R_1, R_2$ are real numbers. 

\end{lemma}

\begin{proof} Let $A_1$ be the axis of reflection of $I_i$ such that the translation vector is parallel to $A_1$ (same as in the proof of Lemma 5.2). Similarly, let $A_2$ be the axis of reflection of $I_j$ such that the translation vector is parallel to $A_2$. Since $I_i{I_j}^{-1}$ is a rotation rather than translation, $A_1$ intersects $A_2$ at a point, which we take to be the origin $O$. Suppose $A_1$ and $A_2$ form an angle $\alpha$ at $O$ (there are two such angles and we choose one). We take the line bisecting $\alpha$ through $O$ to be the real axis. Then it is not hard to see that $I_i(z)$ and $I_j(z)$ have the desired formula.
\end{proof}

Now the proof of Theorem 1.4 follows from elementary algebra:
\begin{equation*}
\begin{split}
&\norm{I_i(z)-z}=\norm{I_j(z)-z}\\
\iff &(I_i(z)-z)(\overline{I_i(z)}-\overline{z})=(I_j(z)-z)(\overline{I_j(z)}-\overline{z}) \\
\iff &(e^{-i\alpha}\overline{z}+R_1e^{-i\alpha/2}-z)(e^{i\alpha}z+R_1e^{i\alpha/2}-\overline{z})\\
&=(e^{i\alpha}\overline{z}+R_2e^{i\alpha/2}-z)(e^{-i\alpha}z+R_2e^{-i\alpha/2}-\overline{z})\\
\iff &(e^{i\alpha}-e^{-i\alpha})({\overline{z}}^2-z^2)={R_1}^2-{R_2}^2\\
\iff&8\sin(\alpha)xy={R_1}^2-{R_2}^2
\end{split}
\end{equation*}

Since $\alpha\neq 0$ or $\pi$,  this equation characterizes a rectangular hyperbola, which degenerates if and only if $R_1=R_2$.

Therefore, the limit set is contained in a finite union of rectangular hyperbolas and conical points. This proves Theorem 1.4.

\begin{remark}Jo\"el Rouyer suggests the use of these coordinate axis, which simplifies the computation a lot compare to the proof in the last version of this article.
\end{remark}

For the limit set of $f$ on the surface of regular octahedron, we observe degenerated hyperbolas only (see Figure 6, Section 4.2). 

On the last page, we plot the limit sets of $f$ on the surfaces of two octahedra obtained by slightly perturbing the regular octahedron. The second one is also an anti-prism with regular triangular bases, so the limit set displays more symmetry. Each of the two surfaces can be obtained by gluing the two edges with the same letter along indicated directions of the arrows. In both cases, the limit sets are subsets of rectangular hyperbolas, some of which degenerate in the second case.

\begin{figure}[h]
\centering
\includegraphics[width=0.82\textwidth]{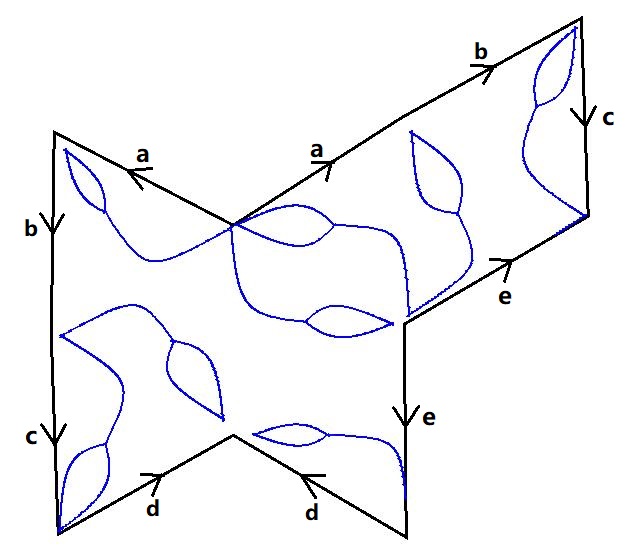}
\includegraphics[width=0.82\textwidth]{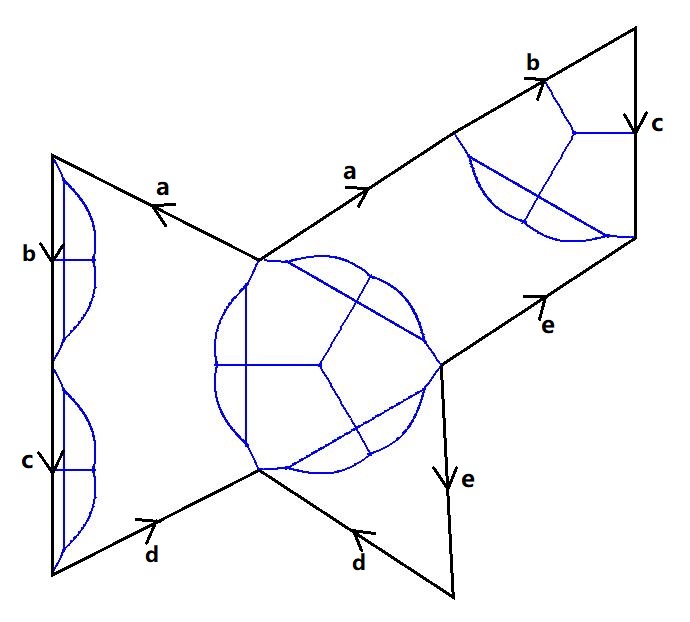}
\end{figure}

%
%

\end{document}